\newtheorem{maintheorem}{Theorem}
\newtheorem{theorem}{Theorem}[section]
\newtheorem{question}[theorem]{Question}
\newtheorem{corollary}[theorem]{Corollary}
\newtheorem{proposition}[theorem]{Proposition}
\newtheorem{lemma}[theorem]{Lemma}
\newtheorem{definition}[theorem]{Definition}
\theoremstyle{remark}
\newtheorem{remark}[theorem]{Remark}
\title{Equilibrium states for  partially hyperbolic diffeomorphisms with hyperbolic linear part}
\author{J. Crisostomo}
\address{Departamento de Matem\'atica,
  ICMC-USP S\~{a}o Carlos-SP, Brazil.}
\email{jorgemat@icmc.usp.br}
\author{A. Tahzibi} 
\address{Departamento de Matem\'atica,
  ICMC-USP S\~{a}o Carlos-SP, Brazil.}
\email{tahzibi@icmc.usp.br}
\date{}                                         
\begin{document}
\maketitle

\begin{abstract}
We address the problem of existence and uniqueness (finiteness) of ergodic equilibrium states for a natural class of partially hyperbolic dynamics homotopic to Anosov. We propose to study the disintegration of equilibrium states along central foliation as a tool to develop the theory of equilibrium states for partially hyperbolic dynamics. 
\end{abstract}

\section{Introduction}

In this paper we address the problem of existence,  uniqueness (or finiteness) of equilibrium states for a class of partially hyperbolic diffeomorphisms which are homotopic to Anosov (``Derived from Anosov") with respect to  H\"{o}lder potentials. The novelty with respect to previous works in this topic is the use of disintegration of measures along central foliation of partially hyperbolic dynamics. 
\begin{definition} Let $M$ be a closed manifold. A diffeomorphism $f:M\rightarrow M$ is called partially hyperbolic if the tangent bundle $TM$  admits a $Df$-invariant descomposition  $TM=E^{s}\oplus E^{c}\oplus E^{u}$ such that all unit
 vectors $v^{\sigma}\in E^{\sigma}_{x} (\sigma=s, c, u)$ for all $x\in M$ satisfy:

$$\parallel Df(x)v^{s}\parallel<\parallel Df(x)v^{c}\parallel<\parallel Df(x)v^{u}\parallel$$
and moreover $\parallel Df\mid_{E^{s}}\parallel<1$ and $\parallel Df^{-1}\mid_{E^{u}}\parallel<1$.

We called $f$ absolutely partially hyperbolic, if it is partially hyperbolic and for any $x,y,z\in M$ 
$$\parallel Df(x)v^{s}\parallel<\parallel Df(y)v^{c}\parallel<\parallel Df(z)v^{u}\parallel$$  
where $v^{s}\in E^{s}_{x}$, $v^{c}\in E^{c}_{y} $ and $v^{u}\in E^{u}_{z} $.
\end{definition}

For partially hyperbolic diffeomorphisms, it is well known that there are foliations $\mathcal{F}^{\sigma}$ tangent to the sub-bundles  $E^{\sigma}$ for $\sigma=s, u$. The leaf of $\mathcal{F}^{\sigma}$ containing $x$ will be called  $\mathcal{F}^{\sigma}(x)$, for $\sigma=s, u$. In general, it is not true that there is a foliation tangent to the central sub-bundle $E^{c}$. However, by Brin, Burago, Ivanov \cite{Brin-Burago-Ivanov} all absolutely partially hyperbolic diffeomorphism on $\mathbb{T}^{3}$ admit central foliation tangent to $E^{c}$. Hertz, Hertz and Ures \cite{Hertz-Hertz-Ures} gave examples of partially hyperbolic diffeomorphisms which do not admit central foliation. Indeed in their example the central bundle is integrable but not uniquely integrable and there is no foliation tangent to the center bundle.

From now on, by partially hyperbolic in this paper we always mean absolutely partially hyperbolic.

\begin{definition} Consider a continuous map $f:M\rightarrow M$ on a compact manifold $M$. We say that an $f-$invariant Borel probability measure $\mu$ is an equilibrium state for $f$ with respect to a potential $\phi \in C^0(M, \mathbb{R})$  if it satisfies
$$h_{\mu}(f)+\int\phi d\mu=sup\lbrace h_{\eta}(f)+\int\phi d\eta: \eta\in\mathcal{M}(f)\rbrace$$
where $\mathcal{M}(f)$ denotes the set of $f-$invariant Borel probability measures.
\end{definition}

If $\phi\equiv 0$, then any $\mu\in\mathcal{M}(f)$ such that 
$h_{\mu}(f)=\sup\lbrace h_{\eta}(f):\eta\in\mathcal{M}(f)\rbrace$ is called measure of maximal entropy.

\begin{definition} Let $f:\mathbb{T}^{3}\rightarrow\mathbb{T}^{3}$ be an absolutely partially hyperbolic diffeomorphism. $f$ is called Derived from Anosov (DA) diffeomorphism if is homotopic to linear Anosov automorphism $A:\mathbb{T}^{3}\rightarrow\mathbb{T}^{3}$ with three invariant subbundle $T(\mathbb{T}^3) = E^{u} \oplus E^c \oplus E^s$ and $\lambda_1 > \lambda_2 > 0 > \lambda_3$ (or two negative exponents and one positive) are three Lyapunov exponents of $A.$
\end{definition}

Let us observe that in the above definition, by \cite{Hammerlindl} if $f$ is absolutely partially hyperbolic diffeomorphism of $\mathbb{T}^3$ with non-compact center leaves, then its linearization $A$ necessarily is Anosov authomorphism with three distinct eigenvalues. Observe that any Anosov automorphism as above may be considered as a partially hyperbolic diffeomorphism, where weak unstable (or weak stable) bundle will be identified as center bundle. 

 Ures \cite{Ures} proved that DA diffeomorphisms have a unique measure of maximal entropy. Climenhaga, Fisher e Thompson \cite{Climenhaga-Fisher-Thompson}, showed that robuslty transitive diffeomorphisms introduced by Ma\~{n}\'{e} and Bonatti-Viana have a unique equilibrium states with respect to   H\"{o}lder continuous potentials under some mild conditions. Here we  study conditional measures of equilibrium states along central foliation. This enables us to find simpler technical conditions for the uniqueness ( in some cases finiteness) of equilibrium states.

\subsection{Preliminary and statement of results}
Let $f$ be a partially hyperbolic diffeomorphism homotopic to Anosov defined before as DA.  By a well-known result of Franks \cite{Franks} $f$  is semiconjugate to $A$. More specifically, there exists $H:\mathbb{T}^{3}\rightarrow\mathbb{T}^{3}$ homotopic to the identity such that $H\circ f=A\circ H$.

\begin{theorem}[Ures, \cite{Ures}]\label{ee1} Let $f:\mathbb{T}^{3}\rightarrow\mathbb{T}^{3}$ be a DA diffeomorphism. Then, $f$ has a unique measure of maximal entropy.
\end{theorem}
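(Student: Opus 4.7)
The strategy is to push the problem from $f$ down to $A$ via Franks's semiconjugacy $H:\mathbb{T}^{3}\to\mathbb{T}^{3}$ with $H\circ f=A\circ H$ and $\widetilde H-\mathrm{id}$ bounded on the universal cover. Since the linear Anosov $A$ has a unique measure of maximal entropy (namely Haar measure $m$) by classical results, the plan is to show (a) the pushforward $H_\ast\mu$ of any MME $\mu$ of $f$ equals $m$, and (b) $H$ is injective on a $\mu$-full-measure set; together these determine $\mu$ uniquely as the pullback of $m$.

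For (a), I would first show that every fiber $H^{-1}(y)$ has zero topological entropy for $f$. The fibers are connected, uniformly bounded in the universal cover (by boundedness of $\widetilde H-\mathrm{id}$), and lie inside a single center leaf of $f$: any two points in the same fiber have forward and backward orbits at uniformly bounded distance in $\mathbb{R}^{3}$, which is incompatible with being separated by a stable or unstable leaf. A uniformly bounded arc on a one-dimensional leaf has zero entropy, and $f^{n}(H^{-1}(y))=H^{-1}(A^{n}y)$ is again a bounded arc of the same kind. Bowen's theorem on entropy of a factor then gives $h_{\mathrm{top}}(f)\le h_{\mathrm{top}}(A)$, and the Ledrappier--Walters fibered entropy formula analogously yields $h_{\mu}(f)\le h_{H_\ast\mu}(A)$; combined with the trivial $h_{H_\ast\mu}(A)\le h_{\mu}(f)$, this forces $H_\ast\mu$ to be an MME of $A$, hence $H_\ast\mu=m$.

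For (b), set $Y=\{y\in\mathbb{T}^{3}:|H^{-1}(y)|>1\}$; this is an $A$-invariant Borel set, and by ergodicity of $m$ with respect to $A$ one has $m(Y)\in\{0,1\}$. The main technical obstacle is to exclude $m(Y)=1$. Each non-trivial fiber is a compact arc on a center leaf, of uniformly bounded length $\ell(y)$, and $\ell(Ay)$ equals the length of $f(H^{-1}(y))$ along its (possibly different) center leaf. If the center Lyapunov exponent of a putative MME $\mu$ is positive on a set of positive measure, this produces exponential growth of $\ell(A^{n}y)$ on a set of positive $m$-measure, contradicting uniform boundedness. Positivity of the center exponent for the MME should follow from Ruelle's inequality together with the entropy equality $h_{\mu}(f)=\lambda_{1}+\lambda_{2}$ in the case $A$ has two positive exponents (the two-negative case is symmetric, run on $f^{-1}$), forcing all three $f$-exponents of $\mu$ to saturate the corresponding bounds coming from the semiconjugacy.

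Once (a) and (b) are in place, $\mu(H^{-1}(Y))=H_\ast\mu(Y)=m(Y)=0$, so $H$ restricts to a measurable bijection between a $\mu$-full-measure set and an $m$-full-measure set, uniquely determining $\mu$. The hardest step is (b): identifying a mechanism that forces the collapsed set to have Haar measure zero requires a careful Lyapunov-exponent analysis on the putative MME and the quasi-isometric structure of center leaves in the universal cover specific to the absolutely partially hyperbolic DA setting; in particular, the classification of non-trivial fibers as precisely those arcs where the center behavior is not uniformly expanding is the geometric heart of the argument.
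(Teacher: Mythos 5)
You should first note that the paper does not prove this statement at all: it is quoted from Ures \cite{Ures}, and the only in-paper material related to its proof is Remark \ref{ee2} and the volume-growth argument reproduced in the last section (the proof of Theorem \ref{CenterExponent}). So your proposal has to be judged against Ures's argument, not against anything proved here. Your step (a) is correct and standard: the fibers $H^{-1}(y)$ are points or uniformly bounded center arcs, their iterates are again fibers, the fiber entropy vanishes, and Ledrappier--Walters gives $h_\mu(f)=h_{H_*\mu}(A)$ (this is exactly item 3 of Remark \ref{ee2}); hence every MME of $f$ pushes to Haar measure $m$. One inaccuracy already here: plain Ruelle's inequality does \emph{not} give positivity of the center exponent, because it bounds $h_\mu$ by $\lambda^u(\mu)+\max(\lambda^c(\mu),0)$ and nothing a priori bounds $\lambda^u(\mu)$ by $\lambda_1$. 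The published route (Ures, and Section 6 of this paper) is the Hua--Saghin--Xia bound $h_\mu(f)\le \chi_u(f)+\lambda^c(\mu)^+$ together with the quasi-isometry/semiconjugacy estimate $\chi_u(f)\le\lambda_1$; only then does $h_\mu=\lambda_1+\lambda_2$ force $\lambda^c(\mu)\ge\lambda_2>0$.

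The genuine gap is the mechanism you propose for (b). You claim that a positive center exponent ``produces exponential growth of $\ell(A^n y)$ on a set of positive $m$-measure, contradicting uniform boundedness.'' This does not follow: the Lyapunov exponent controls $\|Df^n|_{E^c}\|$ along the orbit of a $\mu$-typical point, while $\ell(A^n y)=\mathrm{length}(f^n(H^{-1}(y)))$ is governed (via the mean value theorem) by the center derivative at points of the interval that vary with $n$ and need not be typical; since the collapse intervals do not shrink, there is no bounded-distortion control along $E^c$ to transfer the pointwise exponent to the whole arc. Indeed the cocycle $\log\ell(Ay)-\log\ell(y)$ has zero $\tilde\mu$-average whenever it is integrable, so Birkhoff gives no contradiction. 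Closing this gap requires a real additional idea, e.g.: Pesin theory for the hyperbolic MME (distinct points on a local Pesin unstable set separate exponentially forward, whereas points in a common fiber stay at uniformly bounded distance in the universal cover for all iterates), together with a nontrivial argument that the collapse interval through a regular point actually meets that Pesin set in more than one point; or a disintegration argument along the collapse intervals as in Lemma \ref{ee4} combined with the non-atomicity criterion of Viana--Yang; or an argument on the factor showing directly that $H(C)$ meets almost every center leaf of $A$ in a countable set, hence has zero Haar measure. As written, the ``geometric heart'' you correctly identify as the hardest step is asserted rather than proved, so the proposal is incomplete precisely where the theorem is difficult.
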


\begin{remark}\label{ee2} Ures in \cite{Ures} showed that:
\begin{itemize}
\item[1)] For all $x\in\mathbb{T}^{3}$, $H^{-1}(x)$ is a compact connected interval (including the case of just a point) in a center manifold.
\item[2)] $C=\lbrace x\in\mathbb{T}^{3}: \#H^{-1}H(x)>1\rbrace$  is  $f-$invariant  and $H^{-1}H(C)=C$. By the previous item we know that $C$ is union of compact intervals. we call these intervals as {\it collapse intervals}. 
\item[3)] If $\mu$ is an $f-$invariant measure, then $h_{\mu}(f)=h_{\nu}(A)$ where $\nu=H_{\ast}(\mu)$.
\end{itemize}
\end{remark}
The existence of equilibrium states for partially hyperbolic systems in $\mathbb{T}^{3}$, associated to any continuous  potential is guaranteed as a consequence of the work of L. Diaz, T. Fisher M. Pacific J. Vieitez \cite{Diaz}.
 However, we can ask whether or not it is true that any H\"{o}lder potential admits a unique equilibrium state for a derived from Anosov diffeomorphism?
The following theorems give a partial answer to this question.

\begin{maintheorem}\label{theo:main.A} 
\label{ee3} Let $f:\mathbb{T}^{3}\rightarrow\mathbb{T}^{3}$ be a DA diffeomorphism and let $\psi:\mathbb{T}^{3}\rightarrow\mathbb{R}$ be a potential H\"{o}lder continuous. Let $\phi:=\psi\circ H$ and $\mu$ be an ergodic equilibrium states for $f$ with respect to  $\phi$:

\begin{itemize}
\item[1)] If $\mu(C)=0$, then $\mu$ is the unique equilibrium state.
\item[2)] If $\mu(C)=1$, then $\mu$ is virtually hyperbolic (see \ref{virtual}) and there exists necessarily another equilibrium state $\eta$ for $(f,\phi)$.
\end{itemize}

\end{maintheorem}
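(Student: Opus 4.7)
My plan is to reduce both parts to the thermodynamic formalism of the linear Anosov factor $A$ and then analyze how equilibrium states distribute across the fibers of the semiconjugacy $H$. The starting point is Remark \ref{ee2}(3): since $\phi=\psi\circ H$, for any $f$-invariant measure $\mu$ with $\nu:=H_{\ast}\mu$ one has
\[
  h_{\mu}(f)+\int\phi\,d\mu \;=\; h_{\nu}(A)+\int\psi\,d\nu.
\]
Because $A$ is a hyperbolic toral automorphism and $\psi$ is H\"{o}lder, classical Bowen--Sinai--Ruelle theory produces a unique equilibrium state $\nu_{\psi}$ for $(A,\psi)$. Lifting $\nu_{\psi}$ to any $f$-invariant probability measure (for instance by Krylov--Bogolyubov on the natural extension) yields the reverse inequality $P(f,\phi)\geq P(A,\psi)$, so the two pressures agree. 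Consequently, \emph{$\mu$ is an equilibrium state for $(f,\phi)$ if and only if $H_{\ast}\mu=\nu_{\psi}$}, and both items reduce to describing which $f$-invariant lifts of $\nu_{\psi}$ exist.

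For item (1), suppose $\mu(C)=0$. By Remark \ref{ee2}(2), $C=H^{-1}(H(C))$, so $\nu_{\psi}(H(C))=\mu(C)=0$. Therefore any other equilibrium state $\mu'$ satisfies $\mu'(C)=\nu_{\psi}(H(C))=0$ and is concentrated on $\mathbb{T}^{3}\setminus C$. On this set $H$ is a continuous bijection onto $\mathbb{T}^{3}\setminus H(C)$; by a standard measurable-inverse argument (Lusin--Suslin), both $\mu$ and $\mu'$ coincide with the pushforward of $\nu_{\psi}$ under $(H|_{\mathbb{T}^{3}\setminus C})^{-1}$. Thus $\mu'=\mu$.

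For item (2), suppose $\mu(C)=1$. To produce the second equilibrium state $\eta$ I exploit that $f$ maps each collapse interval $H^{-1}(x)$ bijectively onto $H^{-1}(A(x))$ and preserves the one-dimensional orientation of $E^{c}$ (since $f$ is isotopic to $A$ and $E^{c}$ is orientable). Hence the endpoint selections $p^{\pm}\colon H(C)\to\mathbb{T}^{3}$, assigning to $x$ the two extremities of $H^{-1}(x)$, are Borel measurable and satisfy $f\circ p^{\pm}=p^{\pm}\circ A$. Defining $\eta^{\pm}:=(p^{\pm})_{\ast}\nu_{\psi}$ gives two $f$-invariant probability measures with $H_{\ast}\eta^{\pm}=\nu_{\psi}$, hence both are equilibrium states. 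They are distinct because every collapse interval is non-degenerate (by the very definition of $C$), so $\mu$ coincides with at most one of $\eta^{+},\eta^{-}$ and the other serves as $\eta$. Virtual hyperbolicity of $\mu$ follows from the fact that $\mu$-a.e.\ orbit lives in a compact center arc permuted among collapse intervals of uniformly bounded length; a distortion argument along the one-dimensional central dynamics then yields the sign condition on $\lambda^{c}(\mu)$ required by Definition \ref{virtual}.

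The main technical obstacles I anticipate lie in item (2): establishing measurability and equivariance of the endpoint maps $p^{\pm}$ rests on the structural description of collapse intervals in Remark \ref{ee2}(1)--(2) together with orientability of $E^{c}$, while the Lyapunov-exponent analysis needed for virtual hyperbolicity has to be performed carefully along the one-dimensional central foliation (ruling out $\lambda^{c}(\mu)=0$ is the most delicate point). Item (1) by contrast reduces cleanly to the classical uniqueness of equilibrium states for H\"{o}lder potentials over hyperbolic toral automorphisms.
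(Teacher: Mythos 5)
Your reduction to the factor system and your treatment of item (1) are sound and essentially the paper's own argument: $H_{*}$ of any equilibrium state must be the unique Bowen equilibrium state $\nu_{\psi}$ of $(A,\psi)$, and since $C=H^{-1}H(C)$ has measure zero for every such lift, injectivity of $H$ off $C$ forces uniqueness. Your route to the \emph{second} equilibrium state in item (2) is also viable and in fact bypasses part of the paper's work: pushing $\nu_{\psi}$ forward by the two endpoint selections $p^{\pm}$ produces two mutually singular $f$-invariant lifts of $\nu_{\psi}$ (granting measurability of $p^{\pm}$, which the paper gets from the measurability of the set of left extreme points, and equivariance, which needs the orientation-preservation of $f$ on the center foliation that the paper also invokes), so at least one of them differs from $\mu$; the paper instead first disintegrates $\mu$ itself and then moves its Dirac conditionals to the left endpoints.

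The genuine gap is the first assertion of item (2): virtual hyperbolicity of $\mu$. You appear to have misread Definition \ref{virtual}: it is not a sign condition on $\lambda^{c}(\mu)$, but the existence of a full-measure invariant set $Z$ meeting \emph{each center leaf} in at most one point, i.e.\ a statement about the disintegration of $\mu$ along the center foliation. Your sketch (``orbits live in collapse intervals of bounded length, plus a distortion argument'') does not address this at all, and the statement is not automatic from $\mu(C)=1$: a priori the conditional measures of $\mu$ on collapse intervals could be nonatomic, could carry several atoms, and a single global center leaf contains countably many collapse intervals, so a full-measure set could meet a typical leaf in infinitely many points. The paper's Lemma \ref{ee4} is exactly the missing argument: one disintegrates $\mu$ over the measurable partition into collapse intervals, uses invariance plus ergodicity (the sets $H_{\alpha}$ and the weight classes $A_{n}$) to show the conditionals are a single Dirac mass per collapse interval, and then --- the delicate step --- passes from ``one atom per collapse interval'' to ``at most one point per global center leaf'' by pushing to $\nu_{\psi}$, disintegrating along center plaques of a Markov partition for $A$, proving via $A_{*}\nu_{x}\leq\nu_{A(x)}$ and ergodicity that all atoms have a common weight $\delta_{0}>0$ (hence a uniform bound on the number of atoms per plaque of given length), and then using backward contraction of $A$ along the center to rule out infinitely many atoms per leaf. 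Without an argument of this kind your proof establishes only the existence of $\eta$, not the virtual hyperbolicity claimed in the theorem, and the ``sign of $\lambda^{c}(\mu)$'' line of attack would not substitute for it even if carried out.
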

Observe that clearly the first item of the above theorem implies that in the second case (if it occurs) any other equilibrium state give total mass to the union of collapse intervals $C.$

\begin{question}\label{ee10} Is there any  $\psi:\mathbb{T}^{3}\rightarrow\mathbb{T}^{3}$ H\"{o}lder continuous such that $\phi:= \psi \circ H$  verifies the case 2 of the Theorem $\ref{theo:main.A}$?
\end{question}
Currently we do not know $\phi=\psi\circ H$ with $\psi$ H\"{o}lder continuous that satisfies item 2 of the above theorem, although continuous examples exist. Indeed, let $\nu$ be an ergodic  $A-$invariant measure such that $\nu(H(C))=1$. By a result of Ruelle \cite{Ruelle}, there exist a continuous map $\psi:\mathbb{T}^{3}\rightarrow\mathbb{R}$ such that  $\nu$ is an equilibrium state for $(A,\psi)$. Hence, if $\mu$ is $f-$invarint such that $H_{*}\mu=\nu$, then $\mu(C)=1$ and $\mu$ is an equilibrium state for $(f,\phi=\psi\circ H)$.

The proof of the above theorem enables us to conclude a dichotomy between finiteness of ergodic equilibrium states and hyperbolicity of such measures.  Let $f$ and $\phi$ be as before:

\begin{maintheorem}\label{theo:main.B} Let $f$ and $\phi$ be as in Theorem A. Then either there is an ergodic non-hyperbolic equilibrium state or the number of ergodic equilibrium states is finite.
\end{maintheorem}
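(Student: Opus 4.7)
The plan is to leverage Theorem~\ref{theo:main.A} together with a disintegration of $\mu$ along the central foliation, and to use the hyperbolicity hypothesis to constrain the conditional measures. Let $\mu$ be an ergodic equilibrium state of $(f,\phi)$. Since $C$ is $f$-invariant, ergodicity forces $\mu(C)\in\{0,1\}$. If some ergodic EE has $\mu(C)=0$, Theorem~\ref{theo:main.A}(1) already gives uniqueness, and we are done; so assume every ergodic EE has $\mu(C)=1$ and (by the contrapositive of what we wish to prove) satisfies $\lambda^{c}(\mu)\neq 0$, since for partially hyperbolic $f$ the only possibly vanishing Lyapunov exponent is the central one. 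The first task is to show that $H_{*}\mu$ is the same for every ergodic EE. Remark~\ref{ee2}(3) gives $h_{\mu}(f)=h_{H_{*}\mu}(A)$, obviously $\int\phi\,d\mu=\int\psi\,dH_{*}\mu$, and every $A$-invariant measure lifts to an $f$-invariant one via a Ces\`aro average of any preimage. Therefore $P(f,\phi)=P(A,\psi)$; since $(A,\psi)$ is Anosov--H\"older it has a unique equilibrium state $\nu$, so $H_{*}\mu=\nu$ for every ergodic EE $\mu$ of $(f,\phi)$.

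The core step is to disintegrate $\mu=\int \mu_{y}\,d\nu(y)$ along the fibers $I_{y}:=H^{-1}(y)$, which by Remark~\ref{ee2}(1) are collapse intervals tangent to $E^{c}$. Since $f$ is a homeomorphism permuting connected components of $C$, one has $f(I_{y})=I_{Ay}$, and invariance of $\mu$ translates into $f_{*}\mu_{y}=\mu_{Ay}$ for $\nu$-a.e.\ $y$. Now assume $\lambda^{c}(\mu)<0$; the case $\lambda^{c}(\mu)>0$ is handled symmetrically by replacing $f$ with $f^{-1}$. The Abramov--Rokhlin relative entropy identity gives $h_{\mu}(f\mid H^{-1}\mathcal{B}_{A})=h_{\mu}(f)-h_{\nu}(A)=0$, so the fiber system has zero conditional entropy. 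Combined with bounded distortion along $E^{c}$ and the contraction $|I_{A^{n}y}|\to 0$ implied by $\lambda^{c}(\mu)<0$, a Pesin-type argument on the one-dimensional fibers forces $\mu_{y}$ to concentrate on a single attracting point $s(y)\in I_{y}$, giving a measurable equivariant section $s\colon H(C)\to C$ with $H\circ s=\mathrm{id}$ and $f\circ s=s\circ A$ $\nu$-a.e. The same forward-contraction identifies $s(y)$ uniquely as the attracting point of the fiber dynamics, so at most one ergodic EE has $\lambda^{c}<0$; symmetrically, at most one has $\lambda^{c}>0$. Together with the possible unique ergodic EE having $\mu(C)=0$, this yields finiteness, with at most three ergodic EE altogether.

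The main obstacle is the last step: translating the non-uniform, asymptotic contraction expressed by $\lambda^{c}(\mu)<0$ together with the zero relative entropy into the pointwise rigidity statement ``$\mu_{y}=\delta_{s(y)}$ and $s$ is uniquely determined by $\nu$''. The difficulty is carrying out this rigidity for a non-uniformly hyperbolic measure along the one-dimensional, possibly non-smooth central foliation---precisely the type of estimate that the disintegration framework announced in the introduction is designed to handle, and which also underlies the proof of Theorem~\ref{theo:main.A}.
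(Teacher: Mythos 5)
Your reduction (every ergodic equilibrium state either has $\mu(C)=0$, in which case Theorem \ref{theo:main.A}(1) finishes, or has $\mu(C)=1$ and pushes forward to the unique equilibrium state $\nu$ of $(A,\psi)$, with fiberwise equivariance $f_*\mu_y=\mu_{Ay}$ over the collapse intervals $I_y=H^{-1}(y)$) is consistent with the paper; note also that the Dirac form of the conditionals does not need your Pesin-type argument at all, since Lemma \ref{ee4} gives virtual hyperbolicity for every ergodic equilibrium state with $\mu(C)=1$, whatever the sign of $\lambda^c$. The genuine gap is the final rigidity step: the claim that $\lambda^c(\mu)<0$ identifies the atom $s(y)$ as \emph{the} attracting point of the fiber dynamics, uniquely determined by $\nu$, hence at most one ergodic equilibrium state with $\lambda^c<0$ (and symmetrically at most one with $\lambda^c>0$). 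Non-uniform contraction along the orbit of $s(y)$ for $\mu$-a.e.\ $y$ is a statement about that one invariant section only; it does not exclude a second measurable equivariant section $s'(y)\in I_y$ with $f\circ s'=s'\circ A$ carrying a different ergodic equilibrium state which also has negative center exponent. For instance, the left and right endpoint sections of the collapse intervals are both equivariant, and nothing in your sketch rules out that both support equilibrium states with $\lambda^c<0$; zero relative entropy over $(A,\nu)$ is no help, since \emph{every} virtually hyperbolic lift of $\nu$ has zero relative entropy, and Lemma \ref{ee9} only identifies two such measures when the orbits of their atoms are forward asymptotic, which negativity of both exponents does not provide. So your bound ``at most three ergodic equilibrium states'' is unsupported, and it is strictly stronger than what the paper asserts.

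The paper's proof avoids any such uniqueness claim and establishes only finiteness, by a compactness/semicontinuity argument: if there were infinitely many ergodic equilibrium states $\mu_n$, they all satisfy $\mu_n(C)=1$, are virtually hyperbolic, share the same quotient measure $\tilde\mu$, and their atoms $a_n(\tilde x)$ lie in the same collapse interval $\mathcal{N}(\tilde x)$; a limit section $a(\tilde x)$ of the $a_n(\tilde x)$ defines $\eta=\int\delta_{a(\tilde x)}\,d\tilde\mu$, which is again an equilibrium state because $f$ is entropy expansive (upper semicontinuity of metric entropy), and Lemma \ref{ee9} forces $\lambda^c(\eta)=0$: if $\lambda^c(\eta)<0$, then for large $n$ the atoms $a_n(\tilde x)$ lie in the local stable manifold of $a(\tilde x)$ and hence $\eta=\mu_n$, a contradiction. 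This produces an ergodic non-hyperbolic equilibrium state, i.e.\ the other horn of the dichotomy. To salvage your route you would have to prove a synchronization-type uniqueness of the attracting section for the fiber dynamics on the collapse intervals; this is precisely the step your proposal flags as an obstacle, it is not supplied by the ingredients you invoke, and the paper neither claims nor needs it.
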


Recall that $f$ has a unique measure of maximal entropy. Indeed, we can show that
under small variation hypothesis of the potential, the equilibrium state is unique. 

\begin{maintheorem}\label{theo:main.C} 

 Let $f$ and $\phi$ be as in Theorem A. If the potential satisfies $\sup_{\mathbb{T}^3} \phi - \inf_{\mathbb{T}^3} \phi < \lambda_2$, then there exists a unique equilibrium state for $\phi.$
\end{maintheorem}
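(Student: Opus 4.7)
The plan is to deduce Theorem~\ref{theo:main.C} from Theorem~\ref{theo:main.A} by ruling out its second alternative under the small-variation hypothesis. By the existence of equilibrium states for continuous potentials cited just after Remark~\ref{ee2} and the ergodic decomposition, there is at least one ergodic equilibrium state $\mu$ for $(f,\phi)$. Since the collapse set $C$ is $f$-invariant, ergodicity forces $\mu(C)\in\{0,1\}$. If $\mu(C)=0$, Theorem~\ref{theo:main.A}(1) directly yields uniqueness, so the whole argument reduces to showing that the alternative $\mu(C)=1$ cannot occur when $\sup\phi-\inf\phi<\lambda_2$.

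The key step will be the entropy estimate $h_\mu(f)\leq \lambda_1$ for any ergodic measure with $\mu(C)=1$. The geometric reason is that collapse intervals are compact, have uniformly bounded length, and are permuted by $f$ (forced by $H\circ f=A\circ H$ and Remark~\ref{ee2}), so the center Lyapunov exponent of $\mu$ is non-positive. Passing to the factor $\nu:=H_{*}\mu$, one has $h_\mu(f)=h_\nu(A)$ with $\nu$ supported on $H(C)$; since $H$ sends center leaves of $f$ onto weak unstable leaves of $A$ and collapses each collapse interval to a single point, the conditional measures of $\nu$ along the weak unstable foliation of $A$ are atomic. A Ledrappier--Young type decomposition of $h_\nu(A)$ then kills the $\lambda_2$ contribution, leaving $h_\nu(A)\leq\lambda_1$.

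With this bound, the contradiction is a short pressure comparison. By Theorem~\ref{ee1} there is a measure of maximal entropy $\mu_0$ with $h_{\mu_0}(f)=h_{top}(f)=h_{top}(A)=\lambda_1+\lambda_2$, so the variational principle yields
\[
P_f(\phi)\;\geq\;h_{\mu_0}(f)+\int\phi\,d\mu_0\;\geq\;(\lambda_1+\lambda_2)+\inf\phi.
\]
On the other hand, $\mu$ being an equilibrium state together with the entropy bound gives
\[
P_f(\phi)\;=\;h_\mu(f)+\int\phi\,d\mu\;\leq\;\lambda_1+\sup\phi.
\]
Combining the two inequalities forces $\sup\phi-\inf\phi\geq\lambda_2$, contradicting the hypothesis. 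Therefore $\mu(C)=0$, and Theorem~\ref{theo:main.A}(1) delivers uniqueness.

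The main obstacle is the entropy bound $h_\mu(f)\leq\lambda_1$ when $\mu(C)=1$; once it is in place, the rest of the argument is a pressure gap of size exactly $\lambda_2$ inherited from the maximal entropy measure. I expect this bound either to fall out of the proof of Theorem~\ref{theo:main.A}(2) (whose \emph{virtually hyperbolic} conclusion should be precisely reflecting the atomicity of the weak-unstable conditionals of $\nu$), or to require an independent Ledrappier--Young argument applied to $A$ using the description of $H(C)$ on individual weak unstable leaves.
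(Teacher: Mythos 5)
Your overall strategy coincides with the paper's: show that under the small-variation hypothesis every ergodic equilibrium state must have $\mu(C)=0$, then conclude by Theorem \ref{theo:main.A}(1). Your pressure comparison with the measure of maximal entropy is exactly the paper's computation (there phrased as $h_\mu \geq \lambda_1+\lambda_2-(\sup\phi-\inf\phi)>\lambda_1$), and the reduction to the bound $h_\mu(f)\leq\lambda_1$ whenever $\mu(C)=1$ is precisely Proposition \ref{Yang}. The one place where your proposal is not yet a proof is that bound itself: the assertion that atomicity of the weak-unstable conditionals of $\nu=H_*\mu$ ``kills the $\lambda_2$ contribution'' in Ledrappier--Young is not automatic, since the formula $h_\nu(A)=\lambda_1\delta_1+\lambda_2(\delta^{cu}-\delta_1)$ involves dimensions inside the unstable hierarchy, and mono-atomicity along center plaques does not immediately give $\delta^{cu}=\delta_1$. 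The paper fills exactly this gap with Lemma \ref{td3}, proving $\delta^{cu}=\delta^{uu}$ (the dimension of the quotient measure on strong unstable plaques) via the density/holonomy argument in a Markov rectangle, and then combining it with Lemma \ref{td1} and $\delta^{uu}\leq 1$ to get $h_\mu(f)\leq\lambda_1\delta^{uu}\leq\lambda_1$. Alternatively, as you anticipate, one can bypass Proposition \ref{Yang} altogether by invoking Theorem \ref{yang-viana} (Viana--Yang): under the small-variation hypothesis $h_\mu>\lambda_1$, so the central disintegration cannot be atomic, whereas $\mu(C)=1$ forces mono-atomic central disintegration by Lemma \ref{ee4}; this is the shortcut the paper itself points out just before stating Theorem \ref{theo:main.C}, but you would need to cite that result explicitly rather than expect the bound to ``fall out'' of the proof of Theorem \ref{theo:main.A}(2), which only supplies the atomicity input. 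Finally, your side remark that the center exponent is non-positive when $\mu(C)=1$ is not needed anywhere and would itself require justification; it plays no role in either your pressure argument or the paper's.
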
  

The above theorem can be deduced from Theorem A and the following recent result of Viana-Yang

\begin{theorem} \label{yang-viana}
Let $f$ be a DA diffeomorphism and $\mu$ an invariant measure with $h_{\mu} > \lambda_1$ then the disintegration of $\mu$ along central foliation can not be atomic. 
\end{theorem}

The ``small" variational condition in the Theorem C is quite common in the literature of this topic to achieve uniqueness of equilibrium states and it has been considered by K. Oliveira and M. Viana \cite{Oliveira-Viana}, for non-uniformly expanding maps on compact manifolds, I. Rios and J. Siqueira \cite{Rios-Siqueira}, for partially hyperbolic horsehoes, F. Hofbauer and G. Keller \cite{Hofbauer-Keller} , for piece wise monotomic maps, H. Bruin and M. Todd \cite{Bruin-Todd} for interval maps, and M. Denker,  M. Urb\'{a}nski \cite{ Denker-Urbanski} for rational maps on the Riemann sphere.

V. Climenhaga, T. Fisher and J. Thompson \cite{Climenhaga-Fisher-Thompson} proved uniqueness of equilibrium states for natural class of potentials in the setting of Ma\~{n}\'{e} and Bonatti-Viana class of robustly transitive diffeomorphisms. We observe that in one hand their result is more general, as it treats non partially hyperbolic setting. On the other hand, the class under consideration in their result is the special type of systems which are localized perturbations of uniformly hyperbolic dynamics. In fact their result ``gives a quantitative criterion for existence and uniqueness of equilibrium state involving the topological pressure, the norm and variation of the potential, the tail entropy, and the $C^0$ size of the perturbation from the original  Anosov map for the Ma\~{n}\'{e} and Bonatti type examples."

We also mention that R. Spatzier and D. Visscher \cite{SV} proved uniqueness of equilibrium state for frame flows on closed, oriented,  negatively curved $n-$manifold, $n$ odd  and $(n \neq 7)$ and potentials induced by potentials defined on unit tangent bundles, i.e constant on the fibers of the bundle $FM \rightarrow SM$ where $FM$ and $SM$ are respectively frame bundle and unit tangent bundle.
\section*{acknowledgment}
 A.T was enjoying a research period at Universit\'{e} Paris Sud (support of FAPESP-Brasil-2014/23485-2). He thanks  and  J.C was in a 6 months period of his PhD at Universit\'{e} de Bretagne Occidentale (support of FAPESP-Brasil-2015/15127-­1 and 2013/03735-1). The authors  thank warm hospitality of Laboratoire Topologie et dynamique (Orsay) (specially to Jé\'{e}r\^{o}me Buzzi for invitation and conversation) and Laboratoire de Math\'{e}matiques de Bretagne Atlantique (specially to R. Leplaideur).
Both authors thank J. R. Var\~{a}o for conversations.

\section{Disintegration of measures}  \label{disintegrationsection}

In this paper, in order to prove uniqueness (or finiteness) of equilibrium states,  we propose to study the conditional measures of equilibrium states on the leaves of central foliation. 
In what follows we review some basic properties of disintegration of measures.

Let $(M, \mu, \mathcal B)$ be a probability space, where $M$ is a compact metric space, $\mu$ a probability measure and $\mathcal B$ the borelian $\sigma$-algebra.
Given a partition $\mathcal P$ of $M$ by measurable sets, we associate the probability space $(\tilde{M}:= M / \mathcal{P}, \widetilde \mu, \widetilde{\mathcal B})$ by the following way. Let $\pi:M \rightarrow \tilde{M}$ be the canonical projection, that is, $\pi$ associates a point $x$ of $M$ to the partition element of $\mathcal P$ that contains it. Then we define $\widetilde \mu := \pi_* \mu$ and $ \widetilde{\mathcal B}:= \pi_*\mathcal B$.

\begin{definition} \label{definition:conditionalmeasure}
 Given a partition $\mathcal P$. A family $\{\mu_P\}_{P \in \mathcal P} $ is a \textit{system of conditional measures} for $\mu$ (with respect to $\mathcal P$) if
\begin{itemize}
 \item[i)] given $\phi \in C^0(M)$, then $P \mapsto \int \phi \mu_P$ is measurable;
\item[ii)] $\mu_P(P)=1$ $\widetilde \mu$-a.e.;
\item[iii)]
 $\mu = \displaystyle{\int}_{\tilde{M}} \mu_P d\tilde{\mu},$ 
i.e  if $\phi \in C^0(M)$, then 
$\displaystyle{\int}_M \phi d\mu = \int_{\tilde{M}} \int_P \phi d\mu_P  d\tilde{\mu}$.
\end{itemize}
\end{definition}

When it is clear which partition we are referring to, we say that the family $\{\mu_P\}$ \textit{disintegrates} the measure $\mu$. There exists an equivalent form of writing the disintegration formula above: $$\mu = \int_M \mu_x d\mu$$ by considering the conditional measures $\mu_x, x\in M$ where $\mu_x= \mu_y$ if $y \in \mathcal{P}(x).$ In this paper we use both formulation to simplify the notations whenever it is necessary.

\begin{proposition}[ \cite{Einsiedler-Ward}, \cite{Rohlin}]
 If $\{\mu_P\}$ and $\{\nu_P\}$ are conditional measures that disintegrate $\mu$, then $\mu_P = \nu_P$ $\widetilde \mu$-a.e.
\end{proposition}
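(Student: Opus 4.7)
The plan is to fix a continuous test function $\phi$ and show that the two measurable maps $P\mapsto \int \phi\, d\mu_P$ and $P\mapsto \int \phi\, d\nu_P$ agree $\widetilde\mu$-almost everywhere, and then to leverage separability of $C^0(M)$ to pass to a single $\widetilde\mu$-full-measure subset of $\tilde M$ on which $\mu_P$ and $\nu_P$ assign equal integrals to every continuous function, so that they coincide as Radon probability measures.

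\textbf{Step 1: extend (iii) to cutoffs in the base.} By condition (i), the maps $F_\phi(P):=\int \phi\, d\mu_P$ and $G_\phi(P):=\int \phi\, d\nu_P$ are $\widetilde{\mathcal B}$-measurable for each $\phi\in C^0(M)$. The first task is to apply the disintegration identity (iii) to integrands of the form $\phi(x)\,\chi_{\tilde A}(\pi(x))$ with $\tilde A\in\widetilde{\mathcal B}$. Since $\chi_{\tilde A}\circ\pi$ is not continuous in general, I would upgrade (iii) from $C^0(M)$ to bounded Borel functions by a standard monotone class / approximation argument: approximate $\chi_{\tilde A}$ by continuous functions on $\tilde M$ using regularity of $\widetilde\mu$, apply dominated convergence on both sides of (iii), and extend by the monotone class theorem. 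The outcome is
\[
\int_{\tilde A} F_\phi\, d\widetilde\mu \;=\; \int_M \phi\cdot(\chi_{\tilde A}\circ\pi)\, d\mu \;=\; \int_{\tilde A} G_\phi\, d\widetilde\mu,
\]
valid for every $\tilde A\in\widetilde{\mathcal B}$; since $\tilde A$ is arbitrary, $F_\phi=G_\phi$ holds $\widetilde\mu$-a.e.

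\textbf{Step 2: kill the $\phi$-dependence of the null set.} Since $M$ is a compact metric space, $C^0(M)$ is separable in the sup norm. Pick a countable dense family $\{\phi_n\}\subset C^0(M)$, let $N_n\subset\tilde M$ be the $\widetilde\mu$-null set where $F_{\phi_n}\neq G_{\phi_n}$, and set $N:=\bigcup_n N_n$, which is still $\widetilde\mu$-null. For every $P\notin N$ one has $\int \phi_n\, d\mu_P=\int \phi_n\,d\nu_P$ for all $n$, and by (ii) both $\mu_P$ and $\nu_P$ are probability measures on $M$. Sup-norm density then gives $\int\phi\, d\mu_P=\int\phi\, d\nu_P$ for every $\phi\in C^0(M)$, so the Riesz representation theorem forces $\mu_P=\nu_P$ for all $P\notin N$.

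The only non-routine point I anticipate is the extension of (iii) from continuous to bounded Borel integrands in Step 1; everything downstream is bookkeeping. One must also verify mild regularity of the quotient space $(\tilde M,\widetilde{\mathcal B},\widetilde\mu)$ so that the approximation of $\chi_{\tilde A}$ by continuous functions is available, but this is automatic in the standard Borel / Lebesgue--Rokhlin setting relevant for the central foliations used later in the paper.
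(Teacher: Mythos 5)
Your proposal is correct, but note that the paper itself offers no proof of this proposition: it is quoted with citations to Einsiedler--Ward and Rokhlin, and your argument is essentially the standard one found in those references (fix a countable sup-norm dense family in $C^0(M)$, show the fiberwise integrals agree $\widetilde\mu$-a.e.\ for each test function, take a single null set, and conclude via Riesz representation). The only point worth tightening is your Step 1 phrasing about approximating $\chi_{\tilde A}$ by continuous functions \emph{on the quotient} $\tilde M$: the quotient space $M/\mathcal P$ need not carry a topology for which such approximation makes sense, so the regularity argument should be run upstairs, on $M$ itself, for the pulled-back set $\pi^{-1}(\tilde A)\in\mathcal B$. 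Concretely, the clean route you already gesture at is the functional monotone class theorem applied on $M$: the class of bounded Borel functions $g$ for which $P\mapsto\int g\,d\mu_P$ is measurable and $\int_M g\,d\mu=\int_{\tilde M}\int g\,d\mu_P\,d\widetilde\mu$ holds contains $C^0(M)$ and is stable under bounded monotone pointwise limits, hence contains all bounded Borel functions; taking $g=\phi\cdot(\chi_{\tilde A}\circ\pi)$ and using $\mu_P(P)=1$ (so that $\chi_{\tilde A}\circ\pi$ is $\mu_P$-a.s.\ constant equal to $\chi_{\tilde A}(P)$) gives exactly the localized identity $\int_{\tilde A}F_\phi\,d\widetilde\mu=\int_{\tilde A}G_\phi\,d\widetilde\mu$, and your Step 2 then finishes the proof. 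With that adjustment the argument is complete and is the same proof the cited sources supply.
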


\begin{corollary} \label{cor:same.disintegration}
 If $T:M \rightarrow M$ preserves a probability $\mu$ and the partition $\mathcal P$, then  $T_*\mu_P = \mu_{T(P)}, \widetilde \mu$-a.e.
\end{corollary}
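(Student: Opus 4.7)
The plan is to apply the uniqueness part of the preceding proposition to two disintegrations of the same measure $\mu$. Concretely, I would construct a candidate family $\{\nu_P\}_{P\in\mathcal P}$ by setting $\nu_P := T_{\ast}\mu_{T^{-1}(P)}$ and verify that it satisfies the three axioms of Definition \ref{definition:conditionalmeasure}. Then the uniqueness proposition gives $\nu_P = \mu_P$ for $\widetilde\mu$-a.e.\ $P$, which is exactly $T_{\ast}\mu_{T^{-1}(P)} = \mu_P$, equivalently $T_{\ast}\mu_P = \mu_{T(P)}$ for $\widetilde\mu$-a.e.\ $P$.

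The verification goes as follows. Because $T$ preserves the partition $\mathcal P$, it descends to a measurable map $\widetilde T:\widetilde M\to\widetilde M$, and since $T_{\ast}\mu=\mu$ we have $\widetilde T_{\ast}\widetilde\mu=\widetilde\mu$. For axiom (ii), $\mu_{T^{-1}(P)}$ is concentrated on $T^{-1}(P)$, so $T_{\ast}\mu_{T^{-1}(P)}$ is concentrated on $T(T^{-1}(P))=P$. For axiom (i), given $\phi\in C^0(M)$,
\[
P \longmapsto \int \phi\, d(T_{\ast}\mu_{T^{-1}(P)}) = \int (\phi\circ T)\, d\mu_{T^{-1}(P)}
\]
is the composition of the measurable map $P\mapsto T^{-1}(P)=\widetilde T^{-1}(P)$ with the map $Q\mapsto \int(\phi\circ T)\,d\mu_Q$, which is measurable by axiom (i) applied to the original disintegration and the continuous function $\phi\circ T$. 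Finally, for axiom (iii), use $T$-invariance of $\mu$ and the change of variables $P=\widetilde T(Q)$:
\[
\int_M \phi\, d\mu = \int_M (\phi\circ T)\, d\mu = \int_{\widetilde M}\!\int_Q (\phi\circ T)\, d\mu_Q\, d\widetilde\mu(Q) = \int_{\widetilde M}\!\int (\phi\circ T)\, d\mu_{T^{-1}(P)}\, d\widetilde\mu(P),
\]
which equals $\int_{\widetilde M}\int \phi\, d(T_{\ast}\mu_{T^{-1}(P)})\, d\widetilde\mu(P)$, as desired.

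There is no serious obstacle here; the only step that deserves care is making sure the pushforward $\widetilde T_{\ast}\widetilde\mu=\widetilde\mu$ is available (it follows immediately from $T_{\ast}\mu=\mu$ and $\widetilde\mu=\pi_{\ast}\mu$ with $\widetilde T\circ\pi=\pi\circ T$, the latter being the hypothesis that $T$ preserves the partition), and keeping track of the fact that equalities of conditional measures hold only on a set of full $\widetilde\mu$-measure. Once all three axioms hold, the essential uniqueness of disintegrations yields the stated identity.
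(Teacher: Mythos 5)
Your proof is correct and follows essentially the same route as the paper: the paper simply observes that the push-forward family is again a system of conditional measures for $\mu$ and invokes the essential uniqueness of disintegrations, which is exactly what you do, with the verification of the three axioms of Definition \ref{definition:conditionalmeasure} written out in detail (and with the implicit use of invertibility of $T$ to pass between $T_{\ast}\mu_{T^{-1}(P)}=\mu_P$ and $T_{\ast}\mu_P=\mu_{T(P)}$). No gap to report.
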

\begin{proof}
 It follows from the fact that $\{T_*\mu_P\}_{P \in \mathcal P}$ is also a disintegration of $\mu$ and essential uniqueness of system of disintegration.
\end{proof}

\begin{definition} \label{def:mensuravel}
We say that a partition $\mathcal P$ is measurable (or countably generated) with respect to $\mu$ if there exist a measurable family $\{A_i\}_{i \in \mathbb N}$ and a measurable set $F$ of full measure such that 
if $B \in \mathcal P$, then there exists a sequence $\{B_i\}$, where $B_i \in \{A_i, A_i^c \}$ such that $B \cap F = \bigcap_i B_i \cap F$.
\end{definition}

\begin{theorem}[Rokhlin's disintegration \cite{Rohlin}] \label{teo:rokhlin} 
 Let $\mathcal P$ be a measurable partition of a compact metric space $M$ and $\mu$ a borelian probability. Then there exists a disintegration by conditional measures for $\mu$.
\end{theorem}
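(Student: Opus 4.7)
The plan is to construct the disintegration by approximating $\mathcal{P}$ with an increasing sequence of finite partitions and passing to the limit via the martingale convergence theorem. First, using Definition \ref{def:mensuravel}, fix a countable generating family $\{A_i\}_{i\in\mathbb{N}}$ and a full-measure set $F$ such that every $B\in\mathcal{P}$ equals, on $F$, an intersection $\bigcap_i B_i$ with $B_i\in\{A_i,A_i^c\}$. Form the finite $\sigma$-algebras $\mathcal{A}_n:=\sigma(A_1,\dots,A_n)$ with associated finite partitions $\mathcal{P}_n$, whose atoms $P_n(x)$ (intersected with $F$) decrease to $\mathcal{P}(x)\cap F$. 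On atoms of positive measure define the tautological disintegration
\[
\mu_x^{(n)}(B):=\frac{\mu(B\cap P_n(x))}{\mu(P_n(x))},
\]
which clearly satisfies Definition \ref{definition:conditionalmeasure} with respect to $\mathcal{P}_n$.

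Next, apply martingale convergence. For each $\varphi\in C(M)$ the sequence $E_n(\varphi)(x):=\int\varphi\,d\mu_x^{(n)}$ is an $L^1$-bounded $(\mathcal{A}_n)$-martingale, and hence converges $\mu$-a.e.\ and in $L^1$ to the conditional expectation with respect to $\mathcal{A}_\infty:=\sigma\bigl(\bigcup_n\mathcal{A}_n\bigr)$. Since $M$ is a compact metric space, $C(M)$ is separable; pick a countable $\mathbb{Q}$-vector subspace $\mathcal{D}\subset C(M)$ that is dense and contains the constant $1$. Intersecting countably many full-measure sets yields $M_0\subset F$, still of full measure, on which $L_x(\varphi):=\lim_n E_n(\varphi)(x)$ exists for every $\varphi\in\mathcal{D}$. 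By density, positivity and $L_x(1)=1$, the functional $L_x$ extends continuously to a bounded positive linear functional on $C(M)$, and the Riesz representation theorem produces a unique Borel probability $\mu_x$ with $L_x(\varphi)=\int\varphi\,d\mu_x$. Because $L_x$ factors through $\mathcal{A}_\infty$, the assignment $x\mapsto \mu_x$ depends only on $\pi(x)$, which lets us define $\mu_P$ for $P=\pi(x)$.

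It remains to verify properties (i)--(iii) of Definition \ref{definition:conditionalmeasure}. Measurability (i) is automatic, since each $E_n(\varphi)$ is $\mathcal{A}_n$-measurable and the pointwise limit is therefore $\mathcal{A}_\infty$-measurable. The disintegration formula (iii) follows from dominated convergence and Fubini:
\[
\int_M\varphi\,d\mu=\int_M E_n(\varphi)\,d\mu\longrightarrow\int_M\int\varphi\,d\mu_x\,d\mu(x),
\]
and the outer integral rewrites over $\widetilde{M}=M/\mathcal{P}$ against $\widetilde{\mu}=\pi_*\mu$. The step I expect to be the main obstacle is property (ii), $\mu_P(P)=1$ for $\widetilde{\mu}$-a.e.\ $P$, and it is here that the measurability hypothesis on $\mathcal{P}$ is genuinely used. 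For each generator $A_i$ one shows $\mu_x(A_i)=\mathbf{1}_{A_i}(x)$ $\mu$-a.e., by applying martingale convergence to $\mathbf{1}_{A_i}$ (which is eventually $\mathcal{A}_n$-measurable, hence $E_n(\mathbf{1}_{A_i})=\mathbf{1}_{A_i}$ for $n\geq i$) and extending the identity $L_x(\varphi)=\int\varphi\,d\mu_x$ from $C(M)$ to bounded Borel functions via a standard monotone class argument. Intersecting over $i$ yields a full-measure set on which $\mu_x(B_i)=1$ for every $B_i$ appearing in the description of $\mathcal{P}(x)\cap F$ given by Definition \ref{def:mensuravel}, whence $\mu_x(\mathcal{P}(x)\cap F)=1$, completing the verification.
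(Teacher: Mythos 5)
The paper does not actually prove this statement: Theorem \ref{teo:rokhlin} is quoted from the literature (\cite{Rohlin}, see also \cite{Einsiedler-Ward}), so there is no in-paper argument to compare yours against. Your proof is the standard construction found in those references: refine along the finite $\sigma$-algebras $\mathcal{A}_n$ generated by the countable family of Definition \ref{def:mensuravel}, use bounded martingale convergence together with separability of $C(M)$ and the Riesz representation theorem to produce $\mu_x$, and then upgrade the identity $\int\varphi\,d\mu_x=E(\varphi\mid\mathcal{A}_\infty)(x)$ from continuous to bounded Borel $\varphi$ by a monotone class argument, so that $\mu_x(A_i)=\mathbf{1}_{A_i}(x)$ for a.e.\ $x$; this is correct in substance, and you rightly identify property (ii) as the place where measurability of $\mathcal{P}$ is used. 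Two small points to tidy. First, to conclude $\mu_x(\mathcal{P}(x))=1$ you also need $\mu_x(F)=1$ for a.e.\ $x$, since $\mathcal{P}(x)\cap F=\bigcap_i B_i\cap F$; this follows by the same argument applied to $\mathbf{1}_F$ (as $\mu(F)=1$, one has $E(\mathbf{1}_F\mid\mathcal{A}_\infty)=1$ a.e.). Second, the family $\{\mu_P\}$ must be defined (say, by a fixed default measure) on the atoms missing the good set $M_0$; if you define $L_x$ via $\limsup$ over your countable dense family $\mathcal{D}$, the exceptional set is $\mathcal{A}_\infty$-measurable, hence a union of atoms whose image in $\tilde M$ is a $\tilde\mu$-null measurable set, so properties (i)--(iii) hold $\tilde\mu$-a.e.\ as Definition \ref{definition:conditionalmeasure} requires.
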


Let us state a simple but usefull remark which comes from essential uniqueness of disintegration.

\begin{remark} \label{scaling}
Let $(M, \mathcal{B}, \mu)$ be a probability space, $\mathcal{P}$ a measurable partition of $M$ and $X \subset M$ a measurable subset of positive measure. Then $X$ is called $\mathcal{P}-$saturated if  for any $x \in X$ then $\mathcal{P}(x),$ the atom of partition containing $x,$ is contained in $X.$
Let $\mu|_X$ be the normalized (probability) restriction of $\mu$ on $X.$ For any $P \in \mathcal{P}$ such that $P \subset X,$ the conditional measures of $\mu$ and $\mu|_X$ coincide, that is $\mu_P = (\mu|_X)_{P}.$ 

More generally, if $X \subset M$ is a measurable subset with positive measure then $\mathcal{P}$ induces a measurable partition on $X.$ Namely, $$\mathcal{P}_{X}:= \{P_{X}|  P_X:= P \cap X; P \in \mathcal{P}\}$$ is a measurable partition of $X.$ So, by Rokhlin theorem we consider the conditional measures $ (\mu|_X)_{P_X}\}$ obtaining by disintegration of the probability $\mu| X$ on the atoms of partition $\mathcal{P}_X.$ We will use later in the paper the following fact which can be verified using the essential uniqueness of conditional measures: $ (\mu|_{X})_{P_X} = (\mu_P)|_{P_X}$ and consequently $\mu_{P} \leq (\mu|_{X})_{P_X}$ on $P_X \subseteq P.$  
\end{remark}

\subsection{Atomic disintegration along foliations}
In general the partition by the leaves of a foliation may be non-measurable. It is for instance the case for the stable and unstable foliations of  Anosov diffeomorphisms with respect to measures of non vanishing metric entropy. Therefore, by disintegration of a measure along the leaves of a foliation we mean the disintegration on compact foliated boxes. In principle, the conditional measures depend on the foliated boxes, however, two different foliated boxes induce proportional conditional measures. See \cite{Avila-Viana-Wilkinson} for a discussion on this issue. 

\begin{definition}
We say that a foliation $\mathcal F$ has atomic disintegration with respect to a measure $\mu$ if the conditional measures on any foliated box are sum of Dirac measures. 
\end{definition}

Equivalently we could define atomic disintegration as follows: there exist a full measurable subset $Z$ such that $Z$ intersects all leaves in at most a countable set.

Although the disintegration of a measure along a general foliation is defined in compact foliated boxes, it makes sense to say that a foliation $\mathcal F$ has a quantity $k_0 \in \mathbb N$  atoms per leaf. The meaning of ``per leaf'' should always be understood as a generic leaf, i.e. almost every leaf. That means that there is a set $A$ of $\mu$-full measure which intersects a generic leaf on exactly $k_0$ points. 

In the atomic disintegration case, it  may happen that almost all leaves intersect a full meaure set in a non finite but countable  number of points. 

Let us state a recent result of Yang-Viana \footnote{We thank J. Yang for awaring us on the existence of this result when we were working on this paper on the same time.}
\begin{theorem} \label{yang-viana}
Let $f$ be a DA diffeomorphism and $\mu$ an invariant measure with $h_{\mu} > \lambda_1$ then the disintegration of $\mu$ along central foliation can not be atomic. 
\end{theorem}

Let $f$ be a derived from Anosov (or more generally any partially hyperbolic diffeomorphism) diffeomorphism
\begin{definition} \label{virtual}
An $f$-invariant measure $\mu$ is called virtually hyperbolic if there exists a full measurable invariant subset $Z$ such that $Z$ intersects each center leaf in at most one point.
\end{definition}
The above definition was given in \cite{LS} in the context of algebraic automorphisms and the existence of such measures in partially hyperbolic diffeomorphism also had been noticed by (see for instance\cite{Shub-Wilkinson}, \cite{Ponce-Tahzibi-Varao}). 
If $\mu$ is virtually hyperbolic, then the central foliation is measurable with respect to $\mu$ and conditional measures along center leaves are dirac. Indeed the partition into central leaves is equivalent to the partition into points. 

\section{Proof of Theorem \ref{theo:main.A}}
\begin{proof}[Proof of Theorem \ref{theo:main.A}]

From  remark $\ref{ee2}$ we have that  $H_{*}\mu$ is  an equilibrium state for $(A,\psi)$ and it is the unique one since $\psi$ is H\"{o}lder continuous (see \cite{Bowen}).

1. Let us prove the first part of the theorem. So, suppose that $\mu(C) =0.$ If $\nu$ is another equilibrium state for $(f,\psi\circ H)$, then $H_{\ast}\mu=H_{\ast}\nu$. Let $\varphi:\mathbb{T}^{3}\rightarrow\mathbb{R}$ be any continuous map. Since $H^{-1}H(C)=C$ we have $\nu(C)=0$.
Hence, 

$$\begin{array}{lcl}
 \displaystyle{\int}\varphi d\mu &=&\displaystyle{\int}_{\mathbb{T}^{3}\setminus C}\varphi d\mu=\displaystyle{\int}_{\mathbb{T}^{3}\setminus C}\varphi\circ H^{-1}\circ H d\mu=\displaystyle{\int}_{\mathbb{T}^{3}\setminus C}\varphi\circ H^{-1}dH_{\ast}\mu\\
 &=& \displaystyle{\int}_{\mathbb{T}^{3}\setminus C}\varphi\circ H^{-1}dH_{\ast}\nu=\int_{\mathbb{T}^{3}\setminus C}\varphi d\nu\\
 &=&\displaystyle{\int}\varphi d\nu
\end{array}$$
This implies that $\mu=\nu$.

2. Now we prove the second item of the theorem. Let us begin to prove the virtually hyperbolic property of equilibrium state in the case where $\mu(C)=1.$ 
\begin{lemma}\label{ee4}
If $\mu(C)=1$, then $\mu$ is virtually hyperbolic.
\end{lemma}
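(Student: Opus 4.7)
The plan is to use the semiconjugacy $H$ to produce a measurable partition with favourable disintegration properties. Consider the partition $\mathcal{P}$ of $\mathbb{T}^3$ into the connected components of the fibres $H^{-1}(y)$; by Remark~\ref{ee2}(1) these are singletons off $C$ and compact collapse intervals on $C$, each contained in a single centre leaf of $f$. Continuity of $H$ makes $\mathcal{P}$ measurable, and $H\circ f = A\circ H$ makes it $f$-invariant, so Rokhlin's theorem yields a disintegration $\mu = \int \mu_y\, d\nu(y)$ with $\nu=H_*\mu$ and $\mu_y$ supported on $I_y := H^{-1}(y)$; Corollary~\ref{cor:same.disintegration} gives the equivariance $f_*\mu_y = \mu_{A(y)}$ for $\nu$-a.e.\ $y$.

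The heart of the argument is to show that $\mu_y = \delta_{p(y)}$ is a Dirac mass for $\nu$-a.e.\ $y$. First I will observe that the central Lyapunov exponent of $\mu$ vanishes: the intervals $f^n(I_y) = I_{A^n(y)}$ stay uniformly bounded in length, so $\lambda^c \le 0$, while Poincar\'e recurrence on the measurable level sets $\{y : |I_y| \ge \varepsilon\}$ excludes exponential contraction, so $\lambda^c \ge 0$. Second, by Remark~\ref{ee2}(3) one has $h_\mu(f) = h_\nu(A)$; the Abramov--Rokhlin formula then yields zero relative (fibre) entropy of $\mu$ over the factor $H$. Third, each centre leaf of $f$ contains at most countably many collapse intervals (their lengths have locally finite sum along the leaf), so $H(C)$ meets every centre leaf of $A$ in a countable set; combined with $\nu(H(C))=1$ this forces the disintegration of $\nu$ along the centre foliation of $A$ to be purely atomic, and Ledrappier--Young applied to the linear Anosov $A$ yields $h_\nu(A) \le \lambda_1$, hence $h_\mu(f) \le \lambda_1$. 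This entropy bound is consistent with atomicity of the disintegration of $\mu$ along the centre foliation of $f$ (Theorem~\ref{yang-viana}); coupling it with the vanishing fibre entropy, the equivariance $f_*\mu_y=\mu_{A(y)}$ and the ergodicity of $\mu$ should then upgrade atomicity at the level of the centre foliation to the Dirac property on each individual collapse interval.

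With Dirac conditionals in hand, $p(y):=$ atom of $\mu_y$ is a measurable section satisfying $f\circ p = p\circ A$, and $Z := \{p(y): y\in H(C)\}\cup(\mathbb{T}^3\setminus C)$ is measurable, $f$-invariant, and of full $\mu$-measure. To verify that $Z$ meets every centre leaf of $f$ in at most one point, I will use that $H$ induces a bijection between the centre foliations of $f$ and of $A$, so two distinct points of $Z$ lying on a common centre leaf of $f$ would give rise to distinct atoms of $\nu$ on a common centre leaf of $A$; the ergodicity of $\nu$ together with the $A$-equivariance of the atomic conditionals forces a single atom per leaf. The principal obstacle is the Dirac step: the vanishing of $\lambda^c$ and of the fibre entropy alone are not sufficient, and one must combine them with the Ledrappier--Young/Viana--Yang dimensional machinery together with the equivariance of the family $\{\mu_y\}$ in order to propagate atomicity from the centre foliation down to Dirac masses on each individual fibre.
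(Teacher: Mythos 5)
Your setup (disintegration over the fibres of $H$, measurability of that partition, and the equivariance $f_*\mu_y=\mu_{A(y)}$) matches the paper's, but the core of your argument is not actually carried out, and the ingredients you propose for it do not suffice. The decisive step -- that the conditional measure on each collapse interval is a single Dirac mass -- is exactly what you concede you cannot close (``the vanishing of $\lambda^c$ and of the fibre entropy alone are not sufficient\dots one must combine them with the Ledrappier--Young/Viana--Yang dimensional machinery''). Note that Theorem~\ref{yang-viana} points in the wrong direction for this purpose: it says large entropy \emph{excludes} atomic disintegration, so it cannot be used to \emph{produce} atoms, let alone Dirac conditionals; and zero relative entropy over $H$ is compatible with non-atomic fibre measures. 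Your auxiliary claim $\lambda^c(\mu)=0$ is also unjustified: uniform boundedness of $|f^n(I_y)|$ controls an integral of $\|Df^n|_{E^c}\|$ with respect to leaf length, not the derivative at $\mu$-generic points, so it does not give $\lambda^c\le 0$; and $\lambda^c<0$ does not force the fibre lengths to contract, so the recurrence argument for $\lambda^c\ge 0$ does not engage. (Indeed the paper nowhere needs $\lambda^c=0$ on $C$; in the proof of Theorem~\ref{theo:main.B} the authors explicitly work with ergodic equilibrium states carried by $C$ whose center exponent is nonzero.) The paper's route at this point is elementary and avoids all of this: orient the center leaves, let $0_x$ be the left endpoint of the collapse interval $\mathcal N(x)$, and consider $H_\alpha=\{y:\ \mu_{\mathcal N(x)}([0_x,y])\le\alpha\}$; this set is $f$-invariant and satisfies $\mu(H_\alpha)\le\alpha$, so ergodicity forces the conditionals to be atomic, and a second ergodicity argument on the atom weights (the sets $A_n$ of atoms with weight in $[\tfrac1{n+1},\tfrac1n)$) yields exactly one atom per collapse interval.

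There is a second gap at the end. One atom per collapse interval is weaker than virtual hyperbolicity, since a single center leaf contains countably many collapse intervals; your claim that ``ergodicity of $\nu$ together with the $A$-equivariance of the atomic conditionals forces a single atom per leaf'' is an assertion, not an argument (equal weights per plaque is all that equivariance plus ergodicity gives, and that alone allows several, even countably many, atoms on a leaf). The paper closes this with the Ponce--Tahzibi--Var\~ao-type argument: disintegrate $\nu=H_*\mu$ along center plaques of a Markov partition, prove $A_*\nu_x\le\nu_{A(x)}$ and, via the sets $K_\delta$, that a.e.\ plaque carries exactly $\alpha_0$ atoms of equal weight; this bounds the number of atoms in plaques of a fixed length, and backward contraction of $A$ along the center (weak unstable) direction then rules out more atoms on a global leaf. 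Some version of this quantitative step is needed in your write-up; without it, and without a proof of the Dirac step, the proposal does not establish the lemma.
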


\begin{proof}
The similar arguments appear  in \cite{Ponce-Tahzibi-Varao2} and here we repeat for completeness. 
Observe that the partition into central leaves is not necessarily a measurable partition and we are not allowed apriori to apply Rokhlin disintegration result to this partition. However, the preimage by $h$ of the partition into points is a measurable partition (see \cite{Ponce-Tahzibi-Varao}). Under the hypothesis of the lemma we just consider the partition into collapse intervals:
 $$\mathcal{N}:= \{H^{-1}(x): H^{-1}(x) \quad \text{is a non trivial closed interval}  \}$$ So we can speak about disintegration of $\mu$ along collapse intervals. We denote by $\mu_{\mathcal{N}(x)}$ the conditional measure supported on the collapse interval containing $x.$ Of course, if $\mathcal{N}(x) = \mathcal{N}(y)$ then $\mu_{\mathcal{N}(x)} = \mu_{\mathcal{N}(y)}.$

Fix an orientation for the central leaves and for each collapse interval (any element of the partition $\mathcal{N}$) and consider the left extreme point of them. It can be proved that that  the left extreme point of collapse intervals form  a measurable set (see \cite{Ponce-Tahzibi-Varao2}). We call these sets as point zero, that is if $x \in \mathcal C$ then $0_x$ means the left extreme point associated to the segment $\mathcal{N}(x)$ where $\mathcal{N}(x) \in \mathcal{N}$ which contains $x.$ If $y \in \mathcal{N}(x)$ then $[0_x,y]$ stands for the segment inside the center leaf which contains $0_x$ and $y$.

We now consider the set
\[H_\alpha=\{ y: [0_x,y] \subset \mathcal{N}(x) \;|\; \mu_{\mathcal{N}(x)}([0_x,y])\leq \alpha\}\]

We claim that $H_\alpha$ is an invariant set. This comes from the fact that $f(\mathcal{N}(x)) = \mathcal{N}(f(x))$ and $f_{*} \mu_{\mathcal{N}(x)} = \mu_{\mathcal{N}(f(x))}$. Hence $H_\alpha$ is an invariant set. From the definition of disintegration and $H_\alpha$ notice that $\mu (H_\alpha) \leq \alpha$. Since we are assuming that we are not in the atomic case, there must exist $\alpha_0 \in (0,1)$ such that $0 < \mu (H_\alpha) \leq \alpha_0 < 1$. By ergodicity we would have $\alpha \geq \mu (H_{\alpha_0})=1$, absurd.

Thus, the disintegration is indeed atomic. In fact there exist at most one atom in each collapse interval. Indeed, let $A_{n}$ be the set of atoms with weight belonging to the interval $[\frac{1}{n+1}, \frac{1}{n}).$ As disintegration is unique and $\mu$ is invariant, $A_{n}$ is invariant and by ergodicity and usual measure theory argument we get that all of the atoms have full weight. Consequently there is at most one atom in each collapse interval. So, we get a full measurable subset  $\mathcal{M} \subset \mathbb{T}^3$ such that intersects each center leaf in at most a countable number of points. Observe that $H$ restricted to $\mathcal{M}$ is injective.

Now the idea is to use theorem $B$ of \cite{Ponce-Tahzibi-Varao} and conclude that we have exactly one atom per (global) leaf. Although their result is for volume measure, it applies also in our setting. Let us review the main arguments. First of all we show that the number of atoms on  central leaves is bounded. By this we mean that there exist a full measurable subset which intersects all center leaves in a finite (uniformly bounded) number of points. 

By contradiction, suppose that his is not the case. So every full measurable subset of $\mathcal{M}$ intersects any typical center leaves in infinitely many points. Define $\nu := H_* \mu$ and observe that it is an invariant measure by the linear hyperbolic automorphism. Any measurable subset of $H(\mathcal{M})$ of $\nu-$full measure  intersects almost all  center leaves in an infinite (countable) number of points. 

Let $\{R_i\}$ be a Markov partition for $A$ and consider the partition $ \mathcal{P}:= \{\mathcal{F}_{R}^c(x), x \in R_i \quad \text{for some $i$}\}$ where $\mathcal{F}_{R}^c(x)$ denotes the connected component of $\mathcal{F}^c(x) \cap R_i$ and contains $x$ in its interior. The partition $\mathcal{P}$ is a measurable partition and by Rokhlin theorem we can disintegrate $\nu$ along the elemnts of this partition.   As $\nu$ an equilibrium state for Anosov automorphism, it gives zero mass to the boundary of markov partition.  Let $\nu_x$ be the conditional measure supported on $\mathcal{F}_{R}^c(x).$  Observe that, as $H(\mathcal{M})$ intersects typical leaves in a countable number of points, the conditional measures $\nu_x$ should be atomic. 

\begin{proposition}
There is a natural number $\alpha_0 \in \mathbb{N}$ such that for $\nu-$almost every $x,$  $\nu_x$ contains exactly $\alpha_0$ atoms.   
\end{proposition}

\begin{proof}
 Firstly we observe that:
 \begin{lemma}
$A_*\nu_x \leq \nu_{A(x)}$ restricted to the subsets of $\mathcal F^c_{R}(A(x)).$
\end{lemma}

Observe that $A_{*} \nu_x$ and $\nu_{A(x)}$ are probability measures defined respectively on $A(\mathcal{F}_{R}^c(x))$ and $\mathcal F^c_{R}(A(x)).$ Fix an element of Markov partition $R_i.$ By Remark \ref{scaling}, $\nu_x, x \in R_i$ coincides with the disintegration of the normalized restriction of $\nu$ on $R_i$ which we denote by $\nu|_{R_i}.$  As $\nu$ is invariant $A_* (\nu|_{R_i}) = \nu|_{A(R_i)}$, by essential uniqueness of disintegration, $A_* \nu_x$ concides with the disintegration of $\nu|_{A(R_i)}$ along the partition $A(\mathcal{F}_{R}^c(x)), x \in R_i.$   

For any $j$ such that $A(R_i) \cap R_j \neq \emptyset,$ by Markov property $A(R_i)$ crosses $R_j$ completely in the center-unstable direction and so for all $x \in R_i,$
$$ \mathcal{F}^c_{R}(A(x)) \subset A(\mathcal{F}_{R}^c(x)).$$
 Again by remark \ref{scaling} we conclude that $ A_{*} \nu_x \leq \nu_{A(x)}$ on $\mathcal{F}^c_{R} (A(x)).$


 Given any $\delta \geq 0$ consider the set $K_\delta =\{ x \in \mathbb{T}^3 \; | \; \nu_{x}(\{x\}) > \delta \}$, that is, the set of atoms with weight at least $\delta$. 
 If $x\in K_{\delta}$ then
 $$\delta < \nu_x(\{x\}) =  A_{*}\nu_x(\{A(x)\}) \leq \nu_{A(x)} (\{A(x)\}).$$
 
 Thus $A(K_{\delta}) \subset K_{\delta}$, and by the ergodicity of $A$ we have that
$\nu(K_\delta)$ is  zero or one, for each $\delta \geq 0$. Note that $\nu(A_0)=1$ and $\nu(A_1)=0$. Let $\delta_0$ be the critical point for which $\nu(A_\delta)$ changes value, i.e, $\delta_0 = \sup \{\delta : \nu(K_{\delta}) = 1\}$. This means that all the atoms have weight $\delta_0$.  Due to the atomicity of disintegration, the value of $\delta_0$ has to be a strictly positive number.  Since $\nu_x$ is a probability we have an $\alpha_0:=1/\delta_0$ number of atoms as claimed.
\end{proof}

In particular the above lemma shows that given a fixed length $L \in \mathbb{R}$ there exist $N \in \mathbb{N}$ such that the number of atoms in any typical center plaque of size $L$ is at most $N.$ Recall that we had supposed that $H (\mathcal{M})$  intrinsically intersects center leaves in infinitely many points. So, take a center plaque $D \subset \mathcal{F}^c_x$ with more than $N$ atoms. By backward contraction along central leaves by $A$ we get a large $n > 0$ such that the length of $A^{-n} (D)$ is less than $L$. As $\nu$ is invariant and disintegration is unique we get a center plaque with length less than $L$ containing more than $N$ atoms which is absurd. 
The proof of lemma is complete.

\end{proof}
Now we complete the proof of the second item of theorem. Up to now, we have seen that if $\mu (C) =1$ then central foliation is measure theoretically  equivalent to the partition of $\mathbb{T}^3$ into points and consequently measurable. We denote by $(\tilde{M}, \tilde{\mu})$ the quotient space $\mathbb{T}^{3} / \mathcal{F}^c$ equipped with the quotient measure. Observe that by virtual hyperboliciy proved above, any element $\tilde{x} \in \tilde{M}$ can be considered as a unique collapse interval inside the center leaf $\mathcal{F}^c(x).$ From now on we denote this collapse interval by $\mathcal{N}(\tilde{x}).$  We denote by $\tilde{f} : \tilde{M} \rightarrow \tilde{M}$ the induced map on the quotient space. Clearly as $\mu$ is invariant by $f$ then $\tilde{\mu}$ obtained by natural quotient is invariant by $\tilde{f}.$

Now, by lemma \ref{ee4}, we have that $$\mu=\displaystyle{\int}\delta_{a (\tilde{x})} d\tilde{\mu} (\tilde{x})$$ where $a (\tilde{x})\in \mathcal{N}(\tilde{x})$ and $\mathcal{N}(\tilde{x})$ is the collapse interval corresponding to $\tilde{x}$. Let $b (\tilde{x}) \neq a (\tilde{x})$ be the left extreme point of $\mathcal{N}(\tilde{x})$. We define $$\eta=\int\delta_{b (\tilde{x})}d\tilde{\mu} (\tilde{x})$$
$\eta$ is well defined because $\lbrace b (\tilde{x})\rbrace$ is a measurable set. We claim that $\eta$ is $f-$invariant measure and $H_{\ast}\mu=H_{\ast}\eta$, and ergodicity of $\mu$ implies ergodicity of $\eta$. To show that $\eta$ is invariant take any continuous $\xi$ and observe that:

\begin{align*}
\int \xi \circ f d \eta &= \int \int \xi\circ f d \delta_{b(\tilde{x})} d \tilde{\mu}  = \\ \int \xi (f (b ( \tilde{x}))) d \tilde{\mu} &= \int \xi(b(\tilde{f}(\tilde{x}))) d \tilde{\mu} 
= \int \xi(b(\tilde{x})) d \tilde{\mu}  = \int \xi d \eta  .
\end{align*}
 where the third equality comes from the invariance of collapse intervals and that $f$ preserves orientation on center foliation. The fourth equality is consequence of invariance of $\tilde{\mu}$ by $\tilde{f}.$

To prove the ergodicity of $\eta,$ consider any invariant subset $D$ with $\eta(D) >0.$ Observe that $\tilde{\mu}$ is ergodic as an invariant measure of $\tilde{f}.$ As $f(b(\tilde{x})) = b(\tilde{f} (\tilde{x}))$ and $D$ is invariant we have that $\{ \tilde{x}: \chi_D (b(\tilde{x})) = 1\}$ is an $\tilde{f}$ invariant subset of $\tilde{M}.$  So, ergodicity of $\tilde{\mu}$ implies that it has full measure. This implies that $\eta (D) =1.$

  By essential uniqueness of disintegration  $\eta\neq\mu$.


On the other hand, as $H(a (\tilde{x})) = H(b(\tilde{x}))$ and $\phi = \psi \circ H$ we have:

$$\int\phi d\eta =\int\phi(b (\tilde{x}))d\tilde{\mu}=\int\phi(a(\tilde{x}))d\tilde{\mu}=\int\phi d\mu.$$

This implies that $\eta$ is an equilibrium state for $(f,\phi)$.

\end{proof}
	\section{Proof of Theorem \ref{theo:main.B}}

\begin{lemma}\label{ee9} Let $\mu=\displaystyle{\int}\delta_{a(\tilde{x})} d\tilde{\mu}(\tilde{x})$ and $\nu=\displaystyle{\int}\delta_{b (\tilde{x} )}d\tilde{\mu}(\tilde{x})$ be $f-$invariant. If  $\displaystyle\lim_{n\rightarrow\infty} d(f^{n}(a(\tilde{x})), f^{n}(b(\tilde{x}))=0$, then $\mu=\nu$.
\end{lemma}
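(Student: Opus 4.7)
The plan is to test the equality $\mu=\nu$ against an arbitrary continuous function $\varphi\colon \mathbb{T}^3 \to \mathbb{R}$ and use $f$-invariance of both measures to transport the atoms forward, where the hypothesis on asymptotic collapse of orbits forces the two integrals to agree in the limit.

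More precisely, by the disintegration formulas given in the statement,
\[
\int \varphi \, d\mu = \int \varphi(a(\tilde{x})) \, d\tilde{\mu}(\tilde{x}),\qquad \int \varphi \, d\nu = \int \varphi(b(\tilde{x})) \, d\tilde{\mu}(\tilde{x}).
\]
Using $f$-invariance of $\mu$ (and similarly of $\nu$), for every $n\ge 0$ I would write
\[
\int \varphi \, d\mu = \int \varphi\circ f^{n} \, d\mu = \int \varphi(f^{n}(a(\tilde{x}))) \, d\tilde{\mu}(\tilde{x}),
\]
and likewise $\int \varphi \, d\nu = \int \varphi(f^{n}(b(\tilde{x}))) \, d\tilde{\mu}(\tilde{x})$. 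Subtracting gives
\[
\left| \int \varphi \, d\mu - \int \varphi \, d\nu \right| \le \int \left| \varphi(f^{n}(a(\tilde{x}))) - \varphi(f^{n}(b(\tilde{x}))) \right| d\tilde{\mu}(\tilde{x}).
\]

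Now I would invoke uniform continuity of $\varphi$ on the compact manifold $\mathbb{T}^3$ together with the hypothesis $d(f^{n}(a(\tilde{x})), f^{n}(b(\tilde{x}))) \to 0$ as $n\to\infty$: the integrand converges pointwise (in $\tilde{x}$) to $0$ and is uniformly dominated by $2\|\varphi\|_{\infty}$, so dominated convergence yields
\[
\int \varphi \, d\mu = \int \varphi \, d\nu.
\]
Since this holds for every $\varphi\in C^{0}(\mathbb{T}^3)$, the Riesz representation theorem forces $\mu=\nu$.

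The only point that requires a little care is the sense in which the hypothesis is meant to hold: if it is assumed only for $\tilde{\mu}$-almost every $\tilde{x}$ (as is natural, since $a(\tilde{x})$ and $b(\tilde{x})$ themselves are only defined $\tilde{\mu}$-a.e. through the disintegration), the dominated convergence argument is unaffected and the conclusion is the same. I do not expect any real obstacle; the statement is essentially a packaging of invariance plus uniform continuity, and the collapse hypothesis is tailored precisely so that the two disintegrations, read along forward orbits, coalesce in the limit.
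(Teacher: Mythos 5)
Your proof is correct and follows essentially the same route as the paper: push the Dirac disintegrations forward by $f^{n}$ using invariance, estimate the difference of the integrals against a test function, and let the collapse hypothesis force that difference to vanish in the limit. The only inessential difference is that you treat an arbitrary continuous $\varphi$ via uniform continuity and dominated convergence, whereas the paper first takes $\varphi$ Lipschitz (getting the bound $k\,d(f^{n}(a(\tilde{x})),f^{n}(b(\tilde{x})))$) and then uses density of Lipschitz functions in $C(\mathbb{T}^{3})$.
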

\begin{proof} Since $f_{*}\mu=\mu$ and $f_{*}\nu=\nu$, we have
 $\mu=f^{n}_{*}\mu=\displaystyle{\int}\delta_{f^{n}(a(\tilde{x}))}d\tilde{\mu}$ and  $\nu=f^{n}_{*}\nu=\displaystyle{\int}\delta_{f^{n}(b(\tilde{x}))}d\tilde{\mu}$. 
 
 Let $\varphi:\mathbb{T}^{3}\rightarrow\mathbb{R}$ be any Lipschitz map. Hence, 

$$\begin{array}{lcl}
 \mid\displaystyle{\int}\varphi d\mu -\displaystyle{\int}\varphi d\nu\mid &=&\mid \displaystyle{\int}\varphi(f^{n}(a(\tilde{x}))d\tilde{\mu}-\displaystyle{\int}\varphi(f^{n}(b(\tilde{x}))d\tilde{\mu}\mid\\
 &\leq&\displaystyle{\int}\mid \varphi(f^{n}(a(\tilde{x}))-\varphi(f^{n}(b(\tilde{x}))\mid d\tilde{\mu}\\
& \leq&\displaystyle{\int} kd( f^{n}(a(\tilde{x})), f^{n}(b(\tilde{x})))d\tilde{\mu}
 \end{array}$$
This implies that $\displaystyle{\int}\varphi d\mu =\displaystyle{\int}\varphi d\nu$, since that Lipschitz map set is dense  in $C(\mathbb{T}^{3})$, we have that last equality is holds for $\varphi\in C(\mathbb{T}^{3})$. 
\end{proof}

\begin{proof}[Proof of Theorem \ref{theo:main.B}]  To prove the dichotomy in the statement of the theorem, suppose that there does not exist any ergodic equilibrium state with zero central Lyapunov expnent. Now, by contradiction suppose that there is a sequence of ergodic equilibrium states  $\lbrace\mu_{n}\rbrace$ for $(f,\phi)$. By theorem \ref{theo:main.A}  we have that  $\mu_{n}(C)=1$. By Lemma $\ref{ee4}$ all $\mu_n$ are virtually hyperbolic. Observe that for all $n,$ $H_* \mu_n = \nu$ where $\nu$ is the unique equilibrium state of for $(A, \psi).$ 

By uniqueness of disintegration we conclude that the dirac conditional  measures of $\mu_n$ are push forwarded to dirac disintegration of $\nu$ along central leaves of $A.$ This implies that  $\tilde{\mu}_m =  \tilde{\mu}_n$ for all $n, m$ where $\tilde{\mu}_n$ is the quotient measure obtained from the disintegration of $\mu_n$ along central foliation (see section \ref{disintegrationsection}).  In other words, all $\mu_n$ are virtually hyperbolic and for any two $m, n$ there exist full measurable subsets  $Z_m, Z_n$, $\mu_m(Z_m)= \mu_n(Z_n) = 1$  such that $Z_m$ and $Z_n$ intersect almost all center leaves in a unique point and the intersection point belong to the same collapse interval. So,  
\begin{center}
$\mu_{n}=\displaystyle{\int} \delta_{a_n (\tilde{x} ) } d\tilde{\mu}.$ 
\end{center} 
where $\tilde{\mu}$ stands for the quotient measure for all $\mu_n$. We emphasize that the dirac masses $a_n(\tilde{x})$ are in the same collapse interval for all $n.$ Now by compactness of collapse intervals, let
$a (\tilde{x})\in \mathcal{N} (\tilde{x})$ such that $\lim_{n\rightarrow\infty}a_n (\tilde{x})=a (\tilde{x})$. Since for any $n,$ $\lbrace a_n(\tilde{x})\rbrace$ is a measurable invariant set it comes out that  $\lbrace a(\tilde{x})\rbrace$ is measurable and invariant. Define, 
\begin{center}
$\eta=\displaystyle{\int}\delta_{a(\tilde{x})}d\tilde{\mu} (\tilde{x})$ 
\end{center}
Thus,
$$\lim_{n\rightarrow\infty}\mu_{n}=\eta$$ 

Since $f$ is entropy expansive (see \cite{Diaz}), we have $$\limsup_{n\rightarrow\infty}h_{\mu_{n}}(f)\leq h_{\eta}(f)$$
Hence,
$$\limsup_{n\rightarrow\infty}h_{\mu_{n}}(f)+\int\phi d\mu_{n}\leq h_{\eta}(f)+\int\phi d\eta$$ 
This implies that $\eta$ is an equilibrium state for $(f,\phi)$. Then, $H_{*}\mu_{n}=H_{*}\eta$ and thus $\eta(C)=1$. 
 
Since,
 $$\lambda^{c}(\mu_{n})=\int \parallel Df\mid_{E^{c}}\parallel d\mu_{n}$$ we have,
$$0\geq\lim_{n\rightarrow\infty}\lambda^{c}(\mu_{n})=\lim_{n\rightarrow\infty}\int \parallel Df\mid_{E^{c}}\parallel d\mu_{n}=\int \parallel Df\mid_{E^{c}}\parallel d\eta=\lambda^{c}(\eta)$$

We claim that $\lambda^{c}(\eta)=0$. If not,  $\lambda^{c}(\eta)<0$ and by $\lim_{n\rightarrow\infty}a_n (\tilde{x})=a (\tilde{x})$ there exist $n_{0}$ such that $a_n (\tilde{x})$ belong to local stable manifold of $a (\tilde{x})$, for $n\geq n_{0}$. By Lemma $\ref{ee9}$, we have that $\eta=\mu_{n}$, wich it's a contradiction. Then $\lambda^{c}(\eta)=0$.

We have that $\eta$ is an equilibrium state for $(f,\phi)$ with $\lambda^{c}(\eta)=0.$ Using similar argument in the proof of the secod item of Theorem A. it is easy to see that $
\eta$ is an ergodic measure. This yields a contradiction to our assumption. This concludes the proof of the dichotomy.
\end{proof}

\section{Proof of Theorem \ref{theo:main.C}}

Theorem \ref{theo:main.C} is a consequence of Theorem \ref{theo:main.A} and \ref{yang-viana}. However, we include a proof which is interesting by itself.
Let $\lambda_{3}<0<\lambda_{2}<\lambda_{1}$ Lyapunov exponents of $A$. Let  $\mu$ be an equilibrium state for $(f,\phi=\psi\circ H)$.
\begin{proposition} \label{Yang}
If $\mu(C)=1$, then $h_{\mu}(f)\leq\lambda_1$.
\end{proposition}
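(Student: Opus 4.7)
The plan is to reduce, via the semiconjugacy $H$, to an entropy bound for the linear Anosov $A$, and then exploit the atomic structure of the center-disintegration of $\mu$ (coming from virtual hyperbolicity) via a Ledrappier--Young type decomposition.

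First, by Remark \ref{ee2}(3), $h_\mu(f) = h_\nu(A)$ where $\nu := H_* \mu$, so it suffices to show $h_\nu(A) \leq \lambda_1$. Next, the hypothesis $\mu(C)=1$ combined with Lemma \ref{ee4} yields that $\mu$ is virtually hyperbolic: there is a full $\mu$-measure set $Z$ meeting each center leaf of $f$ in at most a countable set (one atom per collapse interval). Because $H$ sends each center leaf of $f$ onto a center leaf of $A$ and collapses each collapse interval to a single point, $H(Z)$ has $\nu$-full measure and meets each center leaf of $A$ in at most a countable set. Essential uniqueness of Rokhlin disintegration then implies that the conditional measures of $\nu$ along $\mathcal{F}^{c}_A$, in foliated boxes, are sums of Dirac masses.

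Finally, I would invoke the Ledrappier--Young formula applied to the linear Anosov $A$ on $\mathbb{T}^3$ with positive Lyapunov exponents $\lambda_1 > \lambda_2$:
\[
h_\nu(A) = \gamma_1 \lambda_1 + \gamma_2 \lambda_2,
\]
where $\gamma_1, \gamma_2 \in [0,1]$ are the partial transverse dimensions of $\nu$ along the strong unstable and center directions of $A$. Atomic conditionals along $\mathcal{F}^{c}_A$ force $\gamma_2 = 0$, hence $h_\nu(A) \leq \gamma_1 \lambda_1 \leq \lambda_1$, which is the desired bound.

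The main obstacle I anticipate is the transport-of-atomicity step in the middle: carefully justifying that the Dirac conditionals of $\mu$ along $\mathcal{F}^{c}_f$ pass to atomic conditionals of $\nu$ along $\mathcal{F}^{c}_A$ through $H$. This uses that $H$ sends the leaves of $\mathcal{F}^{c}_f$ onto the leaves of $\mathcal{F}^{c}_A$ (standard for derived-from-Anosov systems on $\mathbb{T}^3$) and essential uniqueness of disintegration. The closing Ledrappier--Young step is then standard; alternatively, one could argue directly using a Markov partition of $A$ adapted to the $E^{su}\oplus E^{c} \oplus E^{s}$ splitting together with an Abramov--Rokhlin type decomposition along the center foliation, exploiting that atomic disintegration makes the center contribution to the entropy vanish.
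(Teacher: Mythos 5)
Your reduction to $A$ via $h_\mu(f)=h_\nu(A)$ and the transport of (mono-)atomicity through $H$ are fine and follow the paper's setup (Remark \ref{ee2} and Lemma \ref{ee4}); contrary to what you anticipate, that is the routine part. The genuine gap is the final assertion that atomic conditionals along $\mathcal{F}^{c}_A$ force the ``center partial dimension'' $\gamma_2$ in the Ledrappier--Young formula to vanish. In Ledrappier--Young, $\gamma_2$ is not the dimension of the conditional measures of $\nu$ along the center foliation: writing $\delta_1$ for the dimension of the conditionals along strong unstable plaques and $\delta^{cu}$ for the dimension of the conditionals along the full center-unstable leaves, the formula reads $h_\nu(A)=\lambda_1\delta_1+\lambda_2(\delta^{cu}-\delta_1)$, so $\gamma_2=\delta^{cu}-\delta_1$ is a transverse (increment, or quotient-by-$\mathcal{F}^{uu}$) dimension inside the $cu$-leaf. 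Atomicity of the conditionals on center plaques says nothing immediate about this increment; in particular the projection of $\nu^{cu}$ along strong unstable plaques onto a center transversal need not be atomic, so $\gamma_2=0$ cannot simply be read off from atomicity, and it is not what the paper establishes.

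What the paper actually proves is weaker but sufficient, and it is where the real work lies: Lemma \ref{td3} shows, using that almost every center plaque carries a single atom of definite conditional weight (so the bidisc $B^{uu}(x,\epsilon)\times B^{c}(x,\epsilon)$ around the atom has $\nu^{cu}$-measure comparable to the whole tube $B^{uu}(x,\epsilon)\times\mathcal{F}^{c}(x)$), that $\delta^{cu}=\delta^{uu}$, the dimension of the holonomy projection of $\nu^{cu}$ onto a strong unstable plaque; combined with Lemma \ref{td1} (which gives $\delta_1\le\delta^{cu}$) and $\lambda_2<\lambda_1$, this yields $h_\mu(f)=\lambda_1\delta_1+\lambda_2(\delta^{cu}-\delta_1)\le\lambda_1\delta^{uu}\le\lambda_1$, with no claim that $\delta^{cu}-\delta_1=0$. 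Without an argument of this type (or some other justification that atomic center disintegration kills the $\lambda_2$-contribution, which you would have to supply), your proof does not close; as written, the key step is asserted rather than proved.
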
  

\begin{lemma}\label{td1} Let $m$ be a probability measure on $\mathbb{R}^{p}\times\mathbb{R}^{q}$, $\pi$  projection onto $\mathbb{R}^{p}$, $m_{t}$ conditional measures of $m$ along the fibers of $\pi$. Define 
$$\gamma(t)=\liminf_{\epsilon\rightarrow 0}\frac{\log m\circ\pi^{-1}B^{p}(t,\epsilon)}{\log\epsilon}$$
and let $\delta\geq0$ be such that at $m-$a.e. $(s,t)$
$$\delta\leq\liminf_{\epsilon\rightarrow 0}\frac{\log m_t B^{q}(s,\epsilon)}{\log\epsilon}$$
Then, at $m-$a.e. $(s,t)$
$$\delta+\gamma(t)\leq\liminf_{\epsilon\rightarrow 0}\frac{\log m B^{p+q}((s,t),\epsilon)}{\log\epsilon}$$
\end{lemma}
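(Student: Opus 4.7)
The plan is to promote the pointwise $\liminf$ hypothesis on the fibre measures to a uniform estimate on a ``good'' set via an Egoroff-style approximation, extract a Fubini/doubling bound on that good part, and then use Lebesgue differentiation on the Radon measure $m$ to absorb the residual mass. The case $\delta=0$ is trivial because $m(B^{p+q}((s,t),\epsilon))\leq \pi_{*}m(B^{p}(t,\epsilon))$, so I assume $\delta>0$ and fix an arbitrary $\delta'\in(0,\delta)$.

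Set $G_N:=\{(s,t)\in\mathbb{R}^{p+q}: m_t(B^q(s,\epsilon))\leq \epsilon^{\delta'}\text{ for every }\epsilon<1/N\}$. Each $G_N$ is measurable, and the hypothesis gives $m\bigl(\bigcup_N G_N\bigr)=1$, so it suffices to prove the conclusion $m$-a.e. on each $G_N$ and then let $\delta'\nearrow\delta$ along a countable sequence. Working with the sup norm on $\mathbb{R}^{p+q}$ so that $B:=B^{p+q}((s,t),\epsilon)=B^{p}(t,\epsilon)\times B^{q}(s,\epsilon)$, I split $m(B)=m(B\cap G_N)+m(B\cap G_N^{c})$. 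Fubini and disintegration give
$$m(B\cap G_N)=\int_{B^{p}(t,\epsilon)} m_{t'}\bigl(B^{q}(s,\epsilon)\cap G_{N,t'}\bigr)\,d\pi_{*}m(t'),$$
where $G_{N,t'}:=\{s'':(s'',t')\in G_N\}$. For each $t'$, if the fibre intersection is non-empty pick any $s^{*}\in B^{q}(s,\epsilon)\cap G_{N,t'}$; then $(s^{*},t')\in G_N$ and $B^{q}(s,\epsilon)\subset B^{q}(s^{*},2\epsilon)$ yield $m_{t'}(B^{q}(s,\epsilon)\cap G_{N,t'})\leq (2\epsilon)^{\delta'}$ (the bound is trivial when the intersection is empty). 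Integrating,
$$m(B\cap G_N)\leq 2^{\delta'}\epsilon^{\delta'}\pi_{*}m(B^{p}(t,\epsilon)).$$

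For the residual $m(B\cap G_N^{c})$ I invoke the Lebesgue differentiation theorem for the Radon measure $m$ on $\mathbb{R}^{p+q}$, which holds via Besicovich's covering theorem without any doubling assumption: for $m$-a.e. $(s,t)\in G_N$, the ratio $m(B\cap G_N^{c})/m(B)$ tends to $0$. Fixing $\eta\in(0,1)$, this gives $(1-\eta)\,m(B)\leq 2^{\delta'}\epsilon^{\delta'}\pi_{*}m(B^{p}(t,\epsilon))$ for all sufficiently small $\epsilon$; taking $\log/\log\epsilon$ and passing to $\liminf_{\epsilon\to 0}$ kills the constant prefactor and produces $\liminf_{\epsilon}\log m(B)/\log\epsilon \geq \delta'+\gamma(t)$, after which letting $\eta\to 0$ and $\delta'\nearrow\delta$ concludes the argument. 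The main obstacle is precisely the non-uniformity hidden in the hypothesis: the threshold below which $m_t(B^{q}(s,\epsilon))\leq \epsilon^{\delta'}$ holds is not continuous in $(s,t)$, so the natural integrand $m_{t'}(B^{q}(s,\epsilon))$ appearing in Fubini is not controlled for $t'\neq t$. The doubling inclusion $B^{q}(s,\epsilon)\subset B^{q}(s^{*},2\epsilon)$ reroutes the estimate through a nearby good basepoint, and the density-point argument then discards the uncontrolled mass on $G_N^{c}$; together these two devices close the gap.
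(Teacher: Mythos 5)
Your proof is correct: the uniformization on the sets $G_N$, the Fubini/product-ball estimate with the $2\epsilon$ recentering through a nearby good point of the same fibre, and the Besicovitch density-point argument to discard the mass of $B\cap G_N^{c}$ is precisely the classical argument for this dimension lemma, and the remaining details (measurability of $G_N$ via rational radii, insensitivity of the exponents to the choice of norm) are routine. The paper itself gives no proof but simply refers to \cite{Ledrappier-Young} (Lemma 11.3.1 there), and your write-up is essentially a faithful reconstruction of that cited argument.
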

\begin{proof} See \cite{Ledrappier-Young}.

\end{proof} By Lemma \ref{ee4}, we have that if $\mu(C)=1$, then $\mu$ is virtually hyperbolic. Let $\nu=H_{*}\mu$ and $R$ be a Markov's rectangle of $A$. We normalize the restriction of $\nu$ on $A$.  Let $\mathcal{ F}^{cu}$ be a typical unstable leaf of $A$.  Consider $R^{cu}=R\cap \mathcal{ F}^{cu}$. Observe that $R^{cu}$ is foliated by strong unstable plaques and also by central (weak unstable) plaques.  Denote by $\nu^{cu}$ the conditional measure of  $\nu$ (normalized and restricted on $R$) on $R^{cu}$. 

Since disintegration of $\nu$ along  central foliation is mono-atomic, we have
$$\nu^{cu}=\int\delta_{a(t)}d\nu^{uu}(t)$$
where $a(t)$ is the unique atom on the central leaf of $t$ and $\nu^{uu}$ is the quotient measure on the quotient of $R^{cu}$ by central plaques. This quotient space can be identified by a strong unstable plaque.
Denote by $\delta^{cu}$ and $\delta^{uu}$ respectively  the dimension of $\nu^{cu}$ and $\nu^{uu}.$ 

\begin{lemma}\label{td3}$\delta^{cu}=\delta^{uu}$.
\end{lemma}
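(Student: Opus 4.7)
My plan is to prove $\delta^{cu}=\delta^{uu}$ by establishing both directions separately. The lower bound $\delta^{uu}\le\delta^{cu}$ follows immediately from Lemma \ref{td1}. Using the local product structure of the Markov rectangle, identify $R^{cu}$ with $I^{uu}\times I^c$ so that the quotient map by central plaques becomes the projection $\pi$ onto $I^{uu}$; by construction $\pi_*\nu^{cu}=\nu^{uu}$, and the fibre conditionals along central plaques are the Dirac masses $\delta_{a(t)}$, hence of local dimension $0$. Taking $\delta=0$ and observing that $\gamma(t)$ is precisely the local dimension of $\nu^{uu}$ at $t$, the conclusion of Lemma \ref{td1} reads $\delta^{uu}=\gamma(t)\le$ local dimension of $\nu^{cu}$ at $(a(t),t)$, at $\nu^{cu}$-a.e.\ point.

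The reverse inequality $\delta^{cu}\le\delta^{uu}$ is the subtle direction and relies on the graph structure of $\nu^{cu}$ together with the dynamics. Writing $\nu^{cu}=\int\delta_{(a(t),t)}\,d\nu^{uu}(t)$ one has
$$\nu^{cu}\bigl(B((a(t_0),t_0),\varepsilon)\bigr)=\nu^{uu}\bigl(\{t\in B^{uu}(t_0,\varepsilon):a(t)\in B^{c}(a(t_0),\varepsilon)\}\bigr),$$
so the task is to bound the constrained set on the right from below by a ball in $I^{uu}$ of radius comparable to $\varepsilon$ in a $\log$/$\log$ sense. By Lusin's theorem applied to the measurable selection $t\mapsto a(t)$, for every $\eta>0$ there is a compact $K\subset I^{uu}$ with $\nu^{uu}(K)>1-\eta$ on which $a$ is uniformly continuous with some modulus $\omega_K$. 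At a density point $t_0\in K$, restricting to $K$ costs a negligible factor at small scales, and continuity of $a|_K$ yields $\nu^{cu}(B((a(t_0),t_0),\varepsilon))\ge(1-o(1))\,\nu^{uu}(B^{uu}(t_0,\omega_K^{-1}(\varepsilon)))$.

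The principal obstacle is precisely the effective linearity of the modulus: Lusin's theorem gives only an unspecified $\omega_K$, whereas the desired inequality requires $\log\omega_K^{-1}(\varepsilon)/\log\varepsilon\to 1$. To overcome this I would exploit the $A$-invariance of the graph $\{(a(t),t)\}$ together with the asymmetric hyperbolic expansion $\lambda_1>\lambda_2>0$: pulling back by $A^{-n}$ contracts much more strongly in the strong unstable direction than in the centre, which allows macroscopic continuity of $a|_K$ at a fixed scale to be propagated to arbitrarily small scales via iteration of the dynamics, in the spirit of Ledrappier--Young's derivation of entropy/dimension formulae. Once this scale-exchange is rigorized, one obtains $\delta^{cu}\le\delta^{uu}$, and combined with the first step this gives the equality $\delta^{cu}=\delta^{uu}$.
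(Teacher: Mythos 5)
Your first direction is fine and coincides with the paper: since the conditionals of $\nu^{cu}$ along the central plaques are the Dirac masses $\delta_{a(t)}$, Lemma \ref{td1} with $\delta=0$ gives $\delta^{uu}\leq\delta^{cu}$. The problem is the other inequality, which is the only hard part of Lemma \ref{td3}, and there your text is a plan rather than a proof. You correctly reduce matters to showing that $\nu^{uu}\bigl(\{t\in B^{uu}(t_0,\epsilon):a(t)\in B^{c}(a(t_0),\epsilon)\}\bigr)$ is not smaller than $\nu^{uu}(B^{uu}(t_0,c\epsilon))$ up to factors that are negligible on a $\log/\log$ scale, and you correctly diagnose that Lusin's theorem alone cannot give this, because the modulus $\omega_K$ is uncontrolled. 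But the decisive step you propose to fill this hole --- propagating macroscopic continuity to all small scales using $A$-invariance of the atom set and $\lambda_1>\lambda_2$ --- is only announced (``once this scale-exchange is rigorized''), with none of the necessary ingredients carried out: no statement of how the conditional measures and the atoms transform under $A^{-n}$ across Markov rectangles, no argument that the Lusin set $K$ is visited at a positive density of times along orbits, no passage from one fixed scale to the limit $\epsilon\to0$. This missing step is not a routine detail: for a general measurable graph over $\nu^{uu}$ the graph measure can have strictly larger local dimension than the base measure (a graph with Brownian-like oscillation captures only about an $\epsilon^{1/2}$-fraction of the strip in an $\epsilon$-square), so some structure beyond measurability of $t\mapsto a(t)$ must actually be used, and in your write-up it never is. As it stands, $\delta^{cu}\leq\delta^{uu}$ is asserted, not proved.

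For comparison, the paper handles this direction by a different, holonomy/density argument that never mentions a modulus of continuity for $t\mapsto a(t)$. It introduces the set $D$ of points $x$ for which there is $\alpha>0$ with $\nu^{cu}(B^{uu}(x,\epsilon)\times B^{c}(x,\epsilon))\geq\alpha\,\nu^{cu}(B^{uu}(x,\epsilon)\times\mathcal{F}^{c}(x))$ for all $\epsilon>0$, argues from the fact that the fiber conditionals are Dirac at the atoms that $D$ has full $\nu^{cu}$-measure, and then uses $h^{c}_{*}\nu^{cu}=\nu^{uu}$ to bound the strip from below by $\nu^{uu}(B^{uu}(x,\epsilon))$, which yields $\delta^{cu}\leq\delta^{uu}$ immediately at points of $D$. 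So to turn your proposal into a proof you must either carry out your dynamical scale-exchange in full (essentially redoing a piece of Ledrappier--Young in this setting) or replace the Lusin step by a comparison of $\epsilon$-squares against full central strips in the spirit of the paper's set $D$; without one of these, the key inequality of the lemma remains open.
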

\begin{proof}
The inequality $\delta^{cu}\geq\delta^{uu}$ is immediate by  Lemma $\ref{td1}$. 

Now we prove the other inequality. We denote by $B^{c}(x,\epsilon)$ and $B^{uu}(x,\epsilon)$ the open ball with center $x$ and radius $\epsilon$ respectively on the central leaf  and  on the strong unstable leaf of $x$. Define:

$$D=\lbrace x\in R^{cu}: \exists\alpha>0 \vert \nu^{cu}( B^{uu}(x,\epsilon)\times B^{c}(x,\epsilon))\geq\alpha\nu^{cu}(B^{uu}(x,\epsilon)\times\mathcal{F}^{c}(x) ), \forall\epsilon>0\rbrace$$
Claim: $\nu^{cu}(D)=1$. In fact, we prove that all atom $a(x)$ are in $D$. By definition of conditional measure 
  $$1=\delta_{a(x)}(B^{c}(a(x),\gamma)))=\lim_{\epsilon\rightarrow0}\frac{\nu^{cu}( B^{uu}(a(x),\epsilon))\times B^{c}(a(x),\gamma))}{\nu^{cu}(B^{uu}(a(x),\epsilon)\times\mathcal{F}^{c}(x) )}$$
since  $a(x)$ is the unique atom on the central leaf of $x$, we have that the last equality is hold for all $\gamma>0$. 

Hence,
$$1=\lim_{\epsilon\rightarrow0}\frac{\nu^{cu}( B^{uu}(a(x),\epsilon))\times B^{c}(a(x),\epsilon))}{\nu^{cu}(B^{uu}(a(x),\epsilon)\times\mathcal{F}^{c}(x) )}$$

We take a large enough $n$, then there exist $\epsilon_{0}>0$ such that
$$\frac{n-1}{n}\nu^{cu}(B^{uu}(a(x),\epsilon)\times\mathcal{F}^{c}(x) )<\nu^{cu}( B^{uu}(a(x),\epsilon))\times B^{c}(a(x),\epsilon)),\forall\epsilon<\epsilon_{0}$$ this proves the claim.

If $x\in D$ and since that $h^{c}_{\ast}\nu^{cu}=\nu^{uu}$ ($h^{c}$ is the central holonomy in $R^{cu}$), then
$$\begin{array}{lcl}
 \nu^{cu}(B^{uu}(x,\epsilon)\times B^{c}(x,\epsilon)) &\geq&\alpha\nu^{cu}(B^{uu}(x,\epsilon)\times\mathcal{F}^{c}(x) )\\
 &\geq&\alpha\nu^{uu}(B^{uu}(x,\epsilon))
\end{array}$$
hence,
$$\frac{\log\nu^{cu}(B(x,\epsilon))}{\log\epsilon}\leq\frac{\log\nu^{uu}(B^{uu}(x,\epsilon))+\log\alpha}{\log\epsilon}$$
then, $\delta^{cu}\leq\delta^{uu}$.

\end{proof}
\begin{proof}[Proof of the Proposition \ref{Yang}] By Ledrappier and Young's formula \cite{Ledrappier-Young} and $h_{\nu}(A)=h_{\mu}(f)$, it comes out that $$h_{\mu}(f)=\lambda_{1}\delta_{1}+\lambda_{2}(\delta^{cu}-\delta_{1})$$ where $\delta_{1}$ is the  dimension the measure on the strong unstable leaf. By Lemma \ref{td1}, we have  $\delta_{1}\leq\delta^{cu}$ and  by Lemma \ref{td3}, we have $\delta_{1}\leq\delta^{uu}$. 

So,

$$\delta^{uu}(\lambda_{1}-\lambda_{2})\geq\delta_{1}(\lambda_{1}-\lambda_{2})$$
by Lemma \ref{td3},  $$\delta^{uu}\lambda_{1}\geq\lambda_{1}\delta_{1}+\lambda_{2}(\delta^{cu}-\delta_{1})=h_{\mu}(f)$$
since that $\delta^{uu}\leq1$, we have $h_{\mu}(f)\leq\lambda_{1}$.
\end{proof}
\begin{proof}[Proof of Theorem \ref{theo:main.C}]
We claim that if the potential $\phi$ satisfies the low variational hypothesis of the theorem then the entropy of any equilibrium state of $\phi$ is larger than $\lambda_1.$ To see this it is enough to take $\mu$ as any equilibrium state of $\phi$ and $\eta$ the measure of maximal entropy. 
$$
h_{\mu} + \int \phi d \mu \geq h_{\eta} + \int \phi d \eta = \lambda_1 + \lambda_2 + \int \phi d \eta$$
So,
$$
 h_{\mu} \geq \lambda_1 + \lambda_2 + (\int \phi d\eta - \int \phi d \mu) \geq \lambda_1 + \lambda_2 - (sup \phi - inf \phi) > \lambda_1.
$$
By Proposition \ref{Yang}, we have that all equilibrium state ergodic that satisfies the low variational hypothesis give  zero mass to the union of collapse intervals  $C$. Hence, by item 1 of Theorem \ref{theo:main.A}, if the potential $\phi$ satisfies the low variational hypothesis, then $(f,\phi)$ has a unique equilibrium state. 
\end{proof}

\section{Center Lyapunov exponent and equilibrium state}
\begin{theorem}\label{CenterExponent}
Let $\mu$ be an equilibrium state for $f$ w.r.t. a potencial $\phi=\psi\circ H$. If $\lambda^{c}(\mu)>0$, then $$\lambda_{2}\leq\lambda^{c}(\mu)+\sup\phi-\inf\phi.$$

\end{theorem}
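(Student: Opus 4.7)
The plan is to combine three ingredients: a variational lower bound on $h_\mu(f)$ coming from the measure of maximal entropy, an upper bound $\lambda^u(\mu) \leq \lambda_1$ inherited from the Franks semi-conjugacy, and Ruelle's inequality applied to $f$. We may assume $\mu$ is ergodic, since the general case follows by ergodic decomposition.

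First, let $\eta$ denote the unique measure of maximal entropy of $f$ provided by Theorem \ref{ee1}. By Remark \ref{ee2}(3), $h_\eta(f) = h_{H_*\eta}(A) \leq h_{top}(A) = \lambda_1 + \lambda_2$, and since $H$ is a continuous surjective semi-conjugacy one has $h_{top}(f) \geq h_{top}(A)$, so $h_\eta(f) = \lambda_1 + \lambda_2$. Testing the equilibrium condition of $\mu$ against $\eta$ gives
$$h_\mu(f) + \int \phi\, d\mu \geq h_\eta(f) + \int \phi\, d\eta,$$
hence $h_\mu(f) \geq \lambda_1 + \lambda_2 - (\sup\phi - \inf\phi)$.

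Next I would prove the upper bound $\lambda^u(\mu) \leq \lambda_1$ for any ergodic $f$-invariant measure. Lifting to the universal cover $\mathbb{R}^3$, Franks' theorem produces $\tilde H$ with $|\tilde H(z) - z| \leq K$ uniformly, satisfying $\tilde H \circ \tilde f = \tilde A \circ \tilde H$. For $\mu$-generic $x$ and a point $y \neq x$ on the local strong unstable manifold $W^{uu}_{\mathrm{loc}}(x)$, Pesin theory gives intrinsic exponential expansion, $d^{uu}_f(f^n x, f^n y) \asymp e^{n\lambda^u(\mu)}$, and the quasi-isometry of the strong unstable foliation on the cover (Brin-Burago-Ivanov, Hammerlindl) transfers this to the ambient Euclidean distance: $|\tilde f^n \tilde x - \tilde f^n \tilde y| \geq c\, e^{n\lambda^u(\mu)}$ for some $c>0$ and large $n$. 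On the other hand, $\tilde H \tilde f^n = \tilde A^n \tilde H$ together with $|\tilde H - \mathrm{Id}| \leq K$ yields
$$|\tilde f^n \tilde x - \tilde f^n \tilde y| \leq |\tilde A^n \tilde H \tilde x - \tilde A^n \tilde H \tilde y| + 2K \leq \|\tilde A^n\|\bigl(|\tilde x - \tilde y| + 2K\bigr) + 2K \leq C e^{n\lambda_1}.$$
Comparing the two exponential rates as $n \to \infty$ forces $\lambda^u(\mu) \leq \lambda_1$.

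Finally, since $\lambda^c(\mu) > 0$, Ruelle's inequality for $f$ applied to the two positive Oseledets directions gives $h_\mu(f) \leq \lambda^u(\mu) + \lambda^c(\mu) \leq \lambda_1 + \lambda^c(\mu)$. Combining with the lower bound of the first step,
$$\lambda_1 + \lambda_2 - (\sup\phi - \inf\phi) \leq h_\mu(f) \leq \lambda_1 + \lambda^c(\mu),$$
which rearranges to $\lambda_2 \leq \lambda^c(\mu) + \sup\phi - \inf\phi$. The main obstacle is the second step: the estimate $\lambda^u(\mu) \leq \lambda_1$ is morally immediate from $H$ being bounded distance from the identity, but making it rigorous requires the nontrivial quasi-isometry of strong unstable leaves in the DA setting; everything else is a straightforward manipulation of the variational principle and Ruelle's inequality.
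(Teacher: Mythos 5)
Your overall architecture coincides with the paper's: compare $\mu$ with the measure of maximal entropy to get $h_\mu(f)\ge \lambda_1+\lambda_2-(\sup\phi-\inf\phi)$, then establish an upper bound of the form $h_\mu(f)\le \lambda_1+\lambda^c(\mu)$ and subtract. The first and last steps are fine. The genuine gap is in your second step, namely the claim that $\lambda^u(\mu)\le\lambda_1$ for every ergodic invariant measure, and specifically the assertion that for $\mu$-generic $x$ and $y\in W^{uu}_{\mathrm{loc}}(x)$ one has $d^{uu}_f(f^nx,f^ny)\asymp e^{n\lambda^u(\mu)}$. Pesin theory gives the rate $\lambda^u(\mu)$ only for the \emph{backward} contraction of such a pair; the forward separation of a fixed pair of points is governed by the expansion accumulated along the unstable segment $[x,y]^u$ as its iterates escape (essentially the growth of $\int_{[x,y]^u}\|Df^n|_{E^u}\|$, i.e.\ a leafwise volume growth), not by the Lyapunov exponent of the base measure. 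In fact the intermediate claim is false in the stated generality: modify $A$ in a small ball around a fixed point $p$ so as to make $\|Df|_{E^u}(p)\|$ much larger than $e^{\lambda_1}$, keeping absolute partial hyperbolicity (being DA constrains only the homotopy class and the domination, not an upper bound on unstable expansion); then $\lambda^u(\delta_p)>\lambda_1$, while your semiconjugacy/quasi-isometry computation correctly shows that the two chosen points separate at rate at most $\lambda_1$ --- which is exactly where the claimed asymptotic breaks. Since your derivation of $\lambda^u(\mu)\le\lambda_1$ never uses that $\mu$ is an equilibrium state for $\psi\circ H$, the argument cannot be rescued as written.

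The paper closes precisely this hole by replacing Ruelle's inequality $h_\mu(f)\le\lambda^u(\mu)+\lambda^c(\mu)$ with the Hua--Saghin--Xia inequality $h_\mu(f)\le\chi_u(f)+\sum_{\lambda^c_i>0}\lambda^c_i(\mu)$, where $\chi_u(f)$ is the maximal volume growth rate of strong unstable leaves. For that quantity your bounded-distance argument is legitimate: unstable leaves are one-dimensional, so by quasi-isometry their length is comparable to their diameter, and
\[
\mathrm{diam}\bigl(\tilde f^{\,n}(W^u_r(\tilde x))\bigr)\le \mathrm{diam}\bigl(\tilde A^{\,n}\tilde H(W^u_r(\tilde x))\bigr)+2K\le Ce^{n\lambda_1}+2K,
\]
so $\chi_u(f)\le\lambda_1$. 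If you substitute this estimate and Hua--Saghin--Xia for your second and third steps, your first step combines with it exactly as in the paper's proof.
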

The proof of this result is similar to arguments of Ures (\cite{Ures}, Theorem 5.1) and it is based on  a Pesin-Ruelle-like inequality proved by Y. Hua, R. Saghin and Z.Xia in \cite{Hua-Saghin-Xia}. 
Let $\mathcal{W}$ be a foliation. Let $W_{r}(x)$ be the ball of the leaf $W(x)$  with radius r and centered at $x$. Let $$\chi_{\mathcal{W}}(x,f)=\limsup_{n\rightarrow\infty}\frac{1}{n}\log(vol(f^{n}(W_{r}(x)))$$
$\chi_{\mathcal{W}}(x,f)$ is the volume growth rate of the foliation at $x$. Let $$\chi_{\mathcal{W}}(f)=\sup_{x\in M}\chi_{\mathcal{W}}(x,f)$$
Then, $\chi_{\mathcal{W}}(f)$ is the maximum volume growth rate of $\mathcal{W}$ under $f$.  Let us denote $\chi_{u}(f)=\chi_{\mathcal{W}^u}(f)$ when $f$ is a partially hyperbolic diffeomorphism.

\begin{theorem}[\cite{Hua-Saghin-Xia}]
Let $f$ be a $C^{1+\alpha}$ partially hyperbolic diffeomorphism. Let $\mu$ be an ergodic measure and $\lambda^{c}_{i}(\mu)$ the center Lyapunov exponent of $\mu$. Then,
$$h_{\mu}(f)=\chi_{u}(f)+\sum_{\lambda^{c}_{i}>0}\lambda^{c}_{i}(\mu).$$
\end{theorem}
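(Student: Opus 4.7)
The plan is to establish the stated bound (the Pesin–Ruelle–like inequality of Hua–Saghin–Xia, for which the displayed ``$=$'' should be read as ``$\le$'') by combining the classical Margulis–Ruelle argument on the center direction with a volume-growth argument along the unstable foliation. The conceptual novelty over the usual Pesin–Ruelle inequality is that the unstable contribution is measured by the \emph{topological/geometric} quantity $\chi_u(f)$ rather than by the sum of unstable Lyapunov exponents, which is possible because volume expansion along $\mathcal{F}^u$ is globally controlled by $\chi_u(f)$.

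First I would fix $\epsilon>0$ and use Katok's formula to reduce computing $h_\mu(f)$ to counting $(n,\epsilon)$-separated sets inside a Pesin block $\Lambda$ of large $\mu$-measure on which Lyapunov charts exist with uniform size. On $\Lambda$ the Pesin local product structure gives, for every $x\in\Lambda$ and large $n$, a controlled approximation of the Bowen ball $B_n(x,\epsilon)$ by a product $B^s_n(x)\cdot B^c_n(x)\cdot B^u_n(x)$ along the three invariant subbundles. The stable factor contributes nothing since $f$ contracts $E^s$, so the count reduces to estimating the number of $(n,\epsilon)$-separated points along $\mathcal{F}^c$-plaques and $\mathcal{F}^u$-plaques separately.

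Second, for the center factor I would apply the standard Lyapunov-chart / Margulis–Ruelle estimate: the number of $\epsilon$-balls needed to cover a center plaque of size $\epsilon$ after $n$ iterations is at most $\exp\bigl(n(\sum_{\lambda_i^c(\mu)>0}\lambda_i^c(\mu)+o(1))\bigr)$, since $Df^n$ restricted to $E^c$ expands at rates $\lambda_i^c(\mu)$. For the unstable factor I would use the definition of $\chi_u(f)$ directly: the volume of $f^n(W^u_\epsilon(x))$ grows at rate at most $\chi_u(f)$ uniformly in $x$, so a covering of $f^n(W^u_\epsilon(x))$ by $\epsilon$-balls requires at most $\exp(n(\chi_u(f)+o(1)))$ of them, and pulling back through the unstable holonomy gives an $(n,\epsilon)$-cover of $W^u_\epsilon(x)$ of the same cardinality. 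Multiplying the two counts, taking $\log$, dividing by $n$, and letting $n\to\infty$ and $\epsilon\to0$ yields $h_\mu(f)\le\chi_u(f)+\sum_{\lambda_i^c(\mu)>0}\lambda_i^c(\mu)$.

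The main technical obstacle will be the center factor when the center foliation is only continuous (as is allowed here). Pesin theory along $\mathcal{F}^u$ and $\mathcal{F}^s$ is standard, but the "product" decomposition $B^s_n\cdot B^c_n\cdot B^u_n$ of a Bowen ball must be justified with only continuous holonomies along $E^c$. The resolution is to work inside a good set where center holonomies have bounded distortion and to absorb the residual error into the $o(1)$ terms; one uses a Lusin-type restriction together with the fact that $\mu$ is regular. Once this coupling is cleanly set up, the two volume estimates combine multiplicatively and the passage to the limit is routine, giving the inequality in the stated form.
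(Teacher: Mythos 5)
This statement is quoted from \cite{Hua-Saghin-Xia} and the paper supplies no proof of it, so there is no internal argument to compare against; I can only assess your sketch on its own terms. You are right on one important point that the paper gets wrong typographically: the displayed ``$=$'' must be ``$\leq$'' (take $\mu$ a Dirac mass at a fixed point: $h_\mu(f)=0$ while $\chi_u(f)>0$ and $\sum_{\lambda_i^c>0}\lambda_i^c(\mu)\geq 0$), and indeed the paper itself calls the result a ``Pesin--Ruelle--like inequality'' and only ever uses the $\leq$ direction in the proof of Theorem 6.1.

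Your strategy --- a Margulis--Ruelle counting argument in which the unstable contribution is measured by the volume growth $\chi_u(f)$ of unstable disks rather than by the unstable Lyapunov exponents --- is the right one and is essentially what Hua--Saghin--Xia do. But as written there is a genuine gap exactly where you locate the ``main technical obstacle'': the multiplicative combination of the center and unstable covering counts. A Bowen ball $B_n(x,\epsilon)$ has no product structure along $E^s\oplus E^c\oplus E^u$; the center bundle need not be integrable, center holonomies are only continuous with no distortion control, and ``absorbing the residual error into the $o(1)$ terms'' is an assertion, not an argument. The way this is actually resolved is not by decomposing the Bowen ball but by covering $f^n(B)$ (for $B$ an element of a fine partition, or a small $cu$-disk) by tubes around iterated unstable disks: one foliates a small $cu$-disk by unstable plaques, uses that each iterated plaque $f^n(W^u_\epsilon)$ has volume at most $e^{n(\chi_u(f)+o(1))}$ and uniformly bounded geometry (uniformly Lipschitz graphs over $E^u$, which is what converts volume into an $\epsilon$-covering number --- a point you should state rather than assume), and separately counts how many plaques are needed transversally, which is where the Oseledets/Pesin estimate $e^{n(\sum\lambda_i^{c,+}+o(1))}$ on a Pesin block enters, using $C^{1+\alpha}$ for bounded distortion. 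Until that coupling is written out, your proposal is a correct outline of the strategy of the cited paper but not a proof; the coupling is precisely the content of \cite{Hua-Saghin-Xia}.
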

\begin{proof}[proof of Theorem $\ref{CenterExponent}$] $\chi_{u}(f)\leq \lambda_{1}$, in fact. Since $\mathcal{W}^u$ is 1-dimensional the volume is the length. Then, consider $$\frac{1}{n}\log(vol(f^{n}(W_{r}(x))).$$
Observe that $\chi_{u}(f)=\chi_{u}(\tilde{f})$ where $\tilde{f}$ is any lift of $f$ to universal cover. On the one hand, since $\mathcal{W}^u$ is quasi-isometric, we have that 
$$ \frac{1}{n}\log(vol(\tilde{f}^{n}(W_{r}^u(\tilde{x})))\leq\frac{1}{n}\log(Q\text{diam}(\tilde{f}^{n}(W_{r}^u(\tilde{x}))) $$
for some constant $Q$.
On the other hand, $\tilde{H}(\tilde{f}^{n}(W_{r}^u(\tilde{x})))=\tilde{A}^{n}(\tilde{H}(W_{r}^u(\tilde{x})))$. Let $ C=\text{diam}(\tilde{H}(W_{r}^u(\tilde{x}))) $. Then, $ \text{diam}(\tilde{A}^{n}(\tilde{H}(W_{r}^u(\tilde{x})))) \leq C\exp(n\lambda_{1})$. Since $\tilde{H}$ is at bounded distance from the identity we have that there exists a constant $K$ such that $\text{diam}(\tilde{f}^{n}(W_{r}^u(\tilde{x})))\leq C\exp(n\lambda_{1})+K$. Thus,
$$\frac{1}{n}\log(vol(\tilde{f}^{n}(W_{r}^u(\tilde{x})))\leq\frac{1}{n}(Q(C\exp(n\lambda_{1})+K))$$
Then, $\chi_{u}(f)\leq \lambda_{1}$.

On the other hand, We have that
$$\lambda_{1}+\lambda_{2}+\int\phi d\eta= h_{\eta}(f)+\int\phi d\eta\leq h_{\mu}(f)+\int\phi d\mu$$
where $\eta$ is such that $H_{*}\eta=vol$. Hence,
$$\lambda_{1}+\lambda_{2}+\int\phi d\eta\leq\chi_{u}(f)+\lambda^{c}(\mu)+\int\phi d\mu\leq\lambda_{1}+\lambda^{c}(\mu)+\int\phi d\mu$$
and then,
$$\lambda_{2}\leq \lambda^{c}(\mu)+\int\phi d\mu-\int\phi d\eta\leq\lambda^{c}(\mu)+\sup\phi-\inf\phi.$$
Therefore the theorem is proved.
\end{proof}
By the proof the above theorem, we have the next corollary.
\begin{corollary} If $\mu$ is the unique equilibrium state for $f$ w.r.t a potencial $\phi=\psi\circ H$ with $\sup\phi-\inf\phi<\lambda_{2}$. Then, the center Lyapunov exponent of $\mu$ is positive.
\end{corollary}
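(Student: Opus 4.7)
The plan is to argue by contradiction, reusing the chain of estimates already assembled in the proof of Theorem~\ref{CenterExponent}. Suppose $\lambda^{c}(\mu)\le 0$. First I would note that uniqueness of $\mu$ forces ergodicity, because every ergodic component of an equilibrium state is again an equilibrium state (by affinity of $h + \int\phi$ under ergodic decomposition, together with $f$-invariance of the components). This lets me apply the Hua--Saghin--Xia formula $h_{\mu}(f) = \chi_{u}(f) + \sum_{\lambda^{c}_{i}(\mu)>0}\lambda^{c}_{i}(\mu)$ to $\mu$; under our contradiction assumption the sum is empty. Combined with the bound $\chi_{u}(f)\le \lambda_{1}$ already proved inside Theorem~\ref{CenterExponent} (quasi-isometry of $\mathcal{W}^{u}$ in the universal cover together with the semiconjugacy $H\circ f = A\circ H$), this gives $h_{\mu}(f)\le \lambda_{1}$.

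Next, exactly as in the proof of Theorem~\ref{CenterExponent}, I would introduce an auxiliary $f$-invariant measure $\eta$ with $H_{*}\eta = \mathrm{vol}$. Since Lebesgue is the measure of maximal entropy of $A$, item~3 of Remark~\ref{ee2} yields $h_{\eta}(f) = h_{\mathrm{vol}}(A) = \lambda_{1}+\lambda_{2}$. Using that $\mu$ is an equilibrium state for $\phi$ and comparing with $\eta$,
\[
\lambda_{1}+\lambda_{2}+\int\phi\, d\eta \;=\; h_{\eta}(f)+\int\phi\, d\eta \;\le\; h_{\mu}(f)+\int\phi\, d\mu \;\le\; \lambda_{1}+\int\phi\, d\mu,
\]
so rearranging gives $\lambda_{2}\le \int\phi\, d\mu-\int\phi\, d\eta \le \sup\phi-\inf\phi$, contradicting the hypothesis $\sup\phi-\inf\phi < \lambda_{2}$. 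Hence $\lambda^{c}(\mu)>0$.

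I do not expect a genuine obstacle: the corollary is essentially the contrapositive of the inequality in Theorem~\ref{CenterExponent}, with the borderline case $\lambda^{c}(\mu)=0$ (not directly covered by that theorem, whose statement presupposed a strictly positive central exponent) absorbed uniformly through the Hua--Saghin--Xia identity, which reduces to $h_{\mu}(f)=\chi_{u}(f)$ whenever the central exponent is non-positive. The only minor book-keeping point is the deduction of ergodicity from uniqueness, which is standard.
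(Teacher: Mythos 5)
Your proof is correct and follows essentially the same route the paper intends when it says the corollary follows ``by the proof of the above theorem'': assuming $\lambda^{c}(\mu)\le 0$, the Hua--Saghin--Xia bound reduces to $h_{\mu}(f)\le\chi_{u}(f)\le\lambda_{1}$, and the comparison with $\eta$ satisfying $H_{*}\eta=\mathrm{vol}$ forces $\lambda_{2}\le\sup\phi-\inf\phi$, contradicting the low-variation hypothesis. Your explicit remarks on ergodicity from uniqueness and on the empty sum of positive central exponents are just the book-keeping the paper leaves implicit.
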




\begin{thebibliography}{99}
\bibitem{Avila-Viana-Wilkinson}   A. Avila, M. Viana and A. Wilkinson, \emph{Absolute continuity, rigidity and Lyapunov exponents I : geodesic flows}, To appear in Journal of the European Mathematical Society.
\bibitem{Bowen} R. Bowen, \emph{Equilibrium States and Ergodic Theory of Anosov Diffeomorphisms}, Lecture Notes in Mathematics, Vol. 470, Springer-Verlag (1975).


\bibitem{Brin-Burago-Ivanov} M. Brin, D. Burago, S. Ivanov, \emph{On Partially hyperbolic diffeomorphism of 3-manifolds with commutative fundamental group}, Moder Dynamical Systems and Applications (2004), 307-312.
\bibitem{Bruin-Todd} H. Bruin, M. Todd, \emph{Equilibrium states for interval maps: Potentials with $\sup\varphi-\inf\varphi<h_{top}(f)$}, Commun. Math. Phys. 283(2008) 579-611.
\bibitem{Climenhaga-Fisher-Thompson} V. Climenhaga, T. Fisher, J. Thompson, \emph{Unique equilibrium states for the robustly transitive diffeomorphisms of Mañé and Bonatti-Viana.}, prepint.
\bibitem{Denker-Urbanski} M. Denker,  M. Urbánski, \emph{Ergodic theory of equilibrium states for rational maps}, Nonlinearity, 4:103?134, 1991.

\bibitem{Diaz} L. Diaz, T. Fisher, M. Pacifico, J. Vieitez, \emph{Entropy-expensiveness for partially hyperbolic diffeomorphisms}, Discrete Contin. Dyn. Syst., 32, no. 12 (2012), 4195-4207.
\bibitem{Franks} J. Franks, \emph{Anosov diffeomorphisms}, Global Analysis (Proc. Sympos. Pure Math., Vol. XIV, Berkeley, Calif., 1968), pp 61-93 Amer. Math. Soc., Providence, R.I., 1970.
\bibitem{Einsiedler-Ward} Manfred Einsiedler and Thomas Ward, \emph{Ergodic theory with a view towards number theory}, volume 259 of Graduate Texts in Mathematics. Springer-Verlag London, Ltd., London, 2011.
\bibitem{Hammerlindl} A. Hammerlindl, \emph{Leaf conjugacies on the torus}, Ergodic Theory Dynam. Systems, 33:896-933, 2013.
\bibitem{Hofbauer-Keller} F. Hofbauer, G. Keller, \emph{Ergodic properties of invariant measures for piecewise monotonic transformations}, Math. Z., 180:119?140, 1982.
\bibitem{Hua-Saghin-Xia} Y. Hua, R. Saghin, Z. Xia \emph{Topological entropy and partially hyperbolic diffeomorphisms}, Ergodic Theory Dynam. Systems, 28, no. 3 (2008), 843-862.
\bibitem{Ledrappier-Young} F. Ledrappier, L. S. Young, \emph{The metric entropy of diffeomorphisms 	Part II: Relations between entropy, exponents and dimension}, Ann. of Math. 122, (1985), 540-574.
\bibitem{LS} E. Lindenstrauss, K. Scmidt, \emph{Invariant sets and invariant measures of nonexpansive group automorphisms}, Israel J. Math., 144:29--60, 2004. 
\bibitem{Oliveira-Viana}  K. Oliveira, M. Viana, \emph{Thermodynamical formalism for robust classes of potentials and non-uniformly hyperbolic maps}  Ergod. Th. \& Dynam. Sys. 28 (2008) 501-533.
\bibitem{Ponce-Tahzibi-Varao} G. Ponce, A. Tahzibi,  R. Varao, \emph{Minimal Yet Measurable Foliations}, Journal of Modern Dynamical, 8(1) (2014), 93-107.
\bibitem{Ponce-Tahzibi-Varao2} G. Ponce, A. Tahzibi,  R. Varao, \emph{On the Bernoulli Property for certain partially hyperbolic diffeomorphism}, preprint.

\bibitem{Rios-Siqueira} I. Rios, J. Siqueira, \emph{On equilibrium states for partially hyperbolic horsehoes}, Arxiv: 1505.07742v1, 28 may 2015.
\bibitem{Hertz-Hertz-Ures} F. Rodriguez Hertz, M. A. Rodriguez Hertz, R. Ures, \emph{A non-dynamically coherent example on $\mathbb{T}^{3}$}, ArXiv:1409.0738, 2 Sep 2014.
\bibitem{Ruelle}  D. Ruelle, \emph{Thermodynamic Formmalism }, Reading, Mass., Addison-Vesley, 1978.
\bibitem{SV} R. Spatzier, D. Visscher, \emph{Equilibrium measures for certain isometric extensions of Anosov Systems}, Arxiv: 1510.04624v1, 15 Oct 2015.
\bibitem{Shub-Wilkinson}  M. Shub, A. Wilkinson, \emph{Pathological foliations and removable zero exponents} Inventiones Mathematicae, 2000.
\bibitem{Rohlin}  V. A. Rohlin, \emph{Lectures on the entropy theory of transformations with invariant measure}. Uspehi Mat. Nauk, 22(5 (137)):3?56, 1967.
\bibitem{Ruelle-Wilkinson}  D. Ruelle, A. Wilkinson, \emph{Absolutely singular dynamical foliations} Comm. Math. Phys., 219:481?487, 2001.
\bibitem{Ures}  R. Ures, \emph{Intrinsic ergodicity of partially hyperbolic diffeomorphisms with hyperbolic linear part},to appear in Proceedings of AMS.  Proc. Amer. Math. Soc. 140 (2012), no. 6, 1973-1985.
\bibitem{Viana-Yang}  M. Viana, J. Yang, \emph{Measure-theoretical properties of center foliations}, ArXiv:1603.03609, 11 Mar 2016.




\end{thebibliography}
\end{document}